\setlist[itemize]{leftmargin=*}
\definecolor{codegreen}{rgb}{0,0.6,0}
\definecolor{codegray}{rgb}{0.5,0.5,0.5}
\definecolor{codepurple}{rgb}{0.58,0,0.82}
\definecolor{backcolour}{rgb}{0.96,0.96,0.96}
\lstdefinestyle{mystyle}{
    backgroundcolor=\color{backcolour},   
    commentstyle=\color{codegreen},
    keywordstyle=\color{blue},
    numberstyle=\tiny\color{codegray},
    stringstyle=\color{codepurple},
    basicstyle=\ttfamily\footnotesize,
    breakatwhitespace=false,         
    breaklines=true,                 
    captionpos=b,                    
    keepspaces=true,                 
    numbers=left,                    
    numbersep=5pt,                  
    showspaces=false,                
    showstringspaces=false,
    showtabs=false,                  
    tabsize=2
}
\tikzset{>=latex}
\tikzstyle{mypoint}=[inner sep=0pt,outer sep=0pt,minimum size=5pt,fill,circle]
\theoremstyle{definition}
\newtheorem*{theorem}{\textsc{\textbf{Theorem}}}          \newtheorem{theorem-N}[Thm]{\textsc{\textbf{Theorem}}}
\newtheorem*{lemma}{\textsc{\textbf{Lemma}}}              \newtheorem{lemma-N}[Lem]{\textsc{\textbf{Lemma}}}
\newtheorem*{corollary}{\textsc{\textbf{Corollary}}}      \newtheorem{corollary-N}[Cor]{\textsc{\textbf{Corollary}}}
 \newtheorem{proposition-N}[Prop]{\textsc{\textbf{Proposition}}}
\newtheorem*{definition}{\textsc{\textbf{Definition}}}    \newtheorem{definition-N}[def]{\textsc{\textbf{Definition}}}
\newtheorem*{remark}{\textsc{\textbf{Remark}}}            \newtheorem{remark-N}[rem]{\textsc{\textbf{Remark}}}
\newtheorem*{example}{\textsc{\textbf{Example}}}          \newtheorem{example-N}[ex]{\textsc{\textbf{Example}}}
      \newtheorem{observation-N}[Obs]{\textsc{\textbf{Observation}}}
\newtheorem{conjecture-N}[Conj]{\textsc{\textbf{Conjecture}}}
\newtheorem{algorithm-N}[Alg]{\textsc{\textbf{Algorithm}}}
\newenvironment{display}{\begin{center}\begin{tikzcd}}{\end{tikzcd}\end{center}}
\newcommand{\define}[4]{\expandafter#1\csname#3#4\endcsname{#2{#4}}}
\newcommand{\qr}[2]{\genfrac{(}{)}{}{}{#1}{#2}}
\renewcommand{\sl}{\mathfrak{sl}_2}
\renewcommand{\O}{\mathcal{O}}
\renewcommand{\H}{\mathrm{H}}
\newcommand*\bigcdot{\mathpalette\bigcdot@{.5}}
\newcommand*\bigcdot@[2]{\mathbin{\vcenter{\hbox{\scalebox{#2}{$\m@th#1\bullet$}}}}}
 \gdef\xxxmark{%
  \expandafter\ifx\csname @mpargs\endcsname\relax 
     \expandafter\ifx\csname @captype\endcsname\relax 
      \marginpar{xxx}
     \else
      xxx 
     \fi
  \else
     xxx 
  \fi}
 \gdef\xxx{\@ifnextchar[\xxx@lab\xxx@nolab}
 \long\gdef\xxx@lab[#1]#2{\textbf{[\xxxmark #2 ---{\sc #1}]}}
 \long\gdef\xxx@nolab#1{\textbf{[\xxxmark #1]}}
\begin{document}

\title{Filtrations on block subalgebras of reduced universal enveloping algebras}

\author{Andrei Ionov}

\address{Department of Mathematics,
Massachusetts Institute of Technology,
77 Massachusetts Ave., Cambridge, MA 02139, United States; Department of Mathematics, National Research University Higher School of Economics, 6 Usacheva st., Moscow, Russian Federation, 119048}
\email{aionov@mit.edu}

\author{Dylan Pentland}

\address{Massachusetts Institute of Technology,
77 Massachusetts Ave., Cambridge, MA 02139, United States}
\email{dylanp@mit.edu}

\begin{abstract}
We study the interaction between the block decompositions of reduced universal enveloping algebras in positive characteristic, the PBW filtration, and the nilpotent cone. We provide two natural versions of the PBW filtration on the block subalgebra $A_\lambda$ of the restricted universal enveloping algebra $\mathcal{U}_\chi(\mathfrak{g})$ and show these are dual to each other. We also consider a shifted PBW filtration for which we relate the associated graded algebra to the algebra of functions on the Frobenius neighbourhood of $0$ in the nilpotent cone and the coinvariants algebra corresponding to $\lambda$. In the case of $\mathfrak{g}=\mathfrak{sl}_2(k)$ in characteristic $p>2$ we determine the associated graded algebras of these filtrations on block subalgebras of $\mathcal{U}_0(\mathfrak{sl}_2)$. We also apply this to determine the structure of the adjoint representation of $\mathcal{U}_0(\mathfrak{sl}_2)$.
\end{abstract}

\maketitle

\tableofcontents

\section{Introduction}

The key feature of the representation theory of algebraic groups and Lie algebras in characteristic $p>0$ is the linkage principle providing the block decomposition of the category of representations 
(\cite{curtis1966representation}, \cite{humphreys1978blocks}, \cite{andersen1980strong}, \cite{humphreys1998modular}, \cite{brownblocks}). In order to formulate it for a Lie algebra $\mathfrak{g}$ of an algebraic group $G$ one first decomposes the category of representations with respect to the $p$-character $\chi\in\mathfrak{g}^*$ and passes to the corresponding reduced universal enveloping algebra $\mathcal{U}_\chi(\mathfrak{g})$. We note here that via the differential map the case $\chi=0$ is related to the representation theory of algebraic group $G$ by the equivalence of categories $\Rep(G_1) \simeq \mathcal{U}_0(\mathfrak{g})-\Mod$, where $G_1$ is the Frobenius kernel of $G$ and, therefore, is of particular importance. Finally, for $\mathcal{U}_\chi(\mathfrak{g})$ the block decomposition could be understood as the direct sum decomposition of algebras 
\[\mathcal{U}_\chi(\mathfrak{g}) = \bigoplus_\lambda A_\lambda,\]
indexed by certain equivalence-classes of simple $\mathcal{U}_\chi(\mathfrak{g})$-modules. In the current paper we use the PBW filtration on $\mathcal{U}(\mathfrak{g})$ to study the block subalgebras  $A_\lambda$.

We study three filtrations on the block subalgebra $A_\lambda$. The first two are natural ways to put a PBW filtration on $A_\lambda$, by either pushing forward the PBW filtration on $\mathcal{U}_\chi(\mathfrak{g})$ by the map $\pi_\lambda$ projecting onto $A_\lambda$ or by intersection with the PBW filtration on $\mathcal{U}_\chi(\mathfrak{g})$. The shifted PBW filtration is obtained from the push forward filtration by additionally assigning the generators of the Harish-Chandra center degree $1$.

We realize associated graded algebra $\gr_{\mathrm{sh}} A_\lambda$ as a quotient of a tensor product $k[\mathcal{N}_p]\otimes_k C_\lambda$, where $\mathcal{N}_p$ is the Frobenius neighbourhood of $0$ inside the nilpotent cone and $C_\lambda$ is the coinvariants algebra corresponding to the block. For the trivial character this quotient is moreover $G$-equivariant. We also show that the first two PBW filtrations we put on $A_\lambda$ are dual to each other.

In the case of $G=\SL_2(k)$, we compute explicit central idempotents for the block decomposition of $\mathcal{U}_\chi(\mathfrak{g})$. In this case for $\chi=0$ or regular semisimple we are furthermore explicitly determine associated graded algebras of the filtrations as well as determining the structure of $\SL_2$-representations on them for $\chi=0$.
Finally, we use this to determine the structure of the adjoint representation of $\mathcal{U}_0(\mathfrak{sl}_2)$. Our result agrees with the main result of \cite{ostrik} for the small quantum group of $\sl$ at the $p$-th root of unity.

The key ingredient of our approach is understanding the structure of $G$-representation on the algebra of functions $k[\mathcal{N}]$ on the nilpotent cone, which in the case of $\chi=0$ can be exploited to understand block subalgebras  as $G$-modules. In the case of characteristic $0$ the answer were provided in \cite{kostant1963lie}. In the case of characteristic $p$ though the question is substantially less explored and in general only the existence of the good filtration on $k[\mathcal{N}]$ is known (\cite{kumar1999frobenius}, \cite{jantzenbook}, \S II Proposition 4.20). 

This approach to the block subalgebras could possibly be applied to understanding the center of the block subalgebra. The multiplicity of the trivial $\SL_2$-representation as a subrepresentation in $A_\lambda$ or $\gr A_\lambda$ provides an upper bound on the dimension of the center of $A_\lambda$. It would be interesting to understand if this method could be used in the general case to understand $\dim Z(A_\lambda)$. 

We note that $A_\lambda$ for a block containing a single simple projective module $L$, we have $A_\lambda \simeq \End(L)$. It was communicated to us by Pavel Etingof that even in the characteristic $0$ case the pushforward of the PBW filtration onto $\End(L)$ for simple modules is not well-understood, despite it being an important problem. 

\noindent \textbf{\textsc{Notation.}} Throughout, $k$ will denote an algebraically closed field of characteristic $p>0$. We let $X^{(1)} = X \times_k \Spec k_{\mathrm{Frob}}$ be the Frobenius twist of an affine variety over $k$, where $\Spec k_{\mathrm{Frob}}$ is a scheme $\Spec k$ with $k$-algebra structure on $k$ given via $\mathrm{Frob}: k \to k, x \mapsto x^p$. We equip this with a map $\mathrm{Fr}: X \to X^{(1)}$ arising from the universal property of the fibre product and the absolute Frobenius $X \to X$.

By an algebraic group $G$ we will mean a linear algebraic group, so that as a variety it is affine. We put $\mathfrak{g}$ for the Lie algebra of $G$.
We use $\mathcal{U}(\mathfrak{g})$ to denote the universal enveloping algebra, and $\mathcal{U}_\chi(\mathfrak{g})=\mathcal{U}(\mathfrak{g})/\langle x^p - x^{[p]}-\chi(x)^p\rangle$ to denote the reduced universal enveloping algebra for $\chi \in \mathfrak{g}^*$. 
There is an adjoint action of $G$ on $\mathcal{U}(\mathfrak{g})$ induced by the adjoint action of $G$ on $\mathfrak{g}$. For $\chi=0$, the map $\mathcal{U}(\mathfrak{g}) \to \mathcal{U}_0(\mathfrak{g})$ is $G$-equivariant, as we have $\mathrm{Ad}_g(x^{[p]}) = (\mathrm{Ad}_g(x))^{[p]}$. This yields a $G$-action on $\mathcal{U}_0(\mathfrak{g})$. 

We use $\Rep(G), \Rep(\mathfrak{g})$ for the categories of representations of $G$ and $\mathfrak{g}$ respectively. We put $\mathfrak{h}\subset\mathfrak{g}$ for a Cartan subalgebra, which we assume to be fixed, and $W$ for the corresponding Weyl group. We put $\bigcdot$ to denote the dot-action on the weight lattice and $\mathfrak{h}$: $w\bigcdot\lambda\colon=w(\lambda+\rho)-\rho$, with $\rho$ being the sum of fundamental weights.

For a graded vector space $V$, we write $V_d$ for the degree $d$ component. We set $V_{\le d} := \bigoplus_{d'\le d} V_{d'}$.

\section{Background}

This section will cover the background material on the block decomposition for $\mathcal{U}_\chi(\mathfrak{g})$. We will begin with general background, and at the end of the section discuss how these results present themselves in the case $G=\SL_2$. For further details we refer to \cite{jantzenbook}, a good summary of the results is also presented in \cite{ciappara2020lectures}.

\subsection{Structure of the center} \label{sect: center}

Let $G$ be a semisimple connected simply connected algebraic group. Assume that the characteristic $p$ of the field is `very good' as in \cite{centers}. Throughout the paper, we will need to understand the structure of the center of $\mathcal{U}(\mathfrak{g})$ and its image under the quotient map to $\mathcal{U}_\chi(\mathfrak{g})$. 
\begin{definition}
To describe the center $Z(\mathcal{U}(\mathfrak{g}))$, we will need two subalgebras of $\mathcal{U}(\mathfrak{g})$:
\begin{itemize}
    \item We set $Z_{\mathrm{Fr}} :=  k[x_i^p - x_i^{[p]}|i=1,\ldots,\dim\mathfrak{g}]$ to be the $p$-center, where the collection of elements $x_i\in \mathfrak{g}$ is a basis of the Lie algebra as a vector space.
    \item We set $Z_{\mathrm{HC}}:=\mathcal{U}(\mathfrak{g})^G$ to be the subalgebra of $G$-invariant elements in $\mathcal{U}(\mathfrak{g})$ under the adjoint action of $G$ on $\mathcal{U}(\mathfrak{g})$, which is called the Harish-Chandra center.
\end{itemize}
\end{definition}
Under our assumptions we may write the center down explicitly using the following theorem:
\begin{theorem}
We have
\[Z(\mathcal{U}(\mathfrak{g})) = Z_{\mathrm{Fr}} \otimes_{Z_{\mathrm{Fr}}^G} Z_{\mathrm{HC}}\]
where elements of $Z_{\mathrm{Fr}}^G$ are the $G$-invariant elements in the $p$-center.
\end{theorem}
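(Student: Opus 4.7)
My strategy is to pass to the associated graded algebras under the PBW filtration, identify both sides explicitly via Kostant's freeness theorem, and lift back. First I verify that both $Z_{\mathrm{Fr}}$ and $Z_{\mathrm{HC}}$ lie in $Z(\mathcal{U}(\mathfrak{g}))$: for $Z_{\mathrm{Fr}}$ this is the standard restricted-Lie-algebra identity $[y, x^p - x^{[p]}] = 0$, while for $Z_{\mathrm{HC}}$ it uses that $G$ is connected, so $G$-invariants under the adjoint action agree with $\mathfrak{g}$-invariants, which by differentiation are central. Since these subalgebras are central they pairwise commute, and both contain $Z_{\mathrm{Fr}}^G$, so multiplication induces a well-defined algebra homomorphism
\[
\mu : Z_{\mathrm{Fr}} \otimes_{Z_{\mathrm{Fr}}^G} Z_{\mathrm{HC}} \longrightarrow Z(\mathcal{U}(\mathfrak{g})).
\]

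Equipping everything with the induced PBW filtration, $\gr \mathcal{U}(\mathfrak{g}) \simeq S(\mathfrak{g}) = k[\mathfrak{g}^*]$ as a Poisson algebra. One computes $\gr Z_{\mathrm{Fr}} = k[\mathfrak{g}^{*(1)}]$ (the subalgebra generated by $p$-th powers), and in very good characteristic $\gr Z_{\mathrm{HC}} = k[\mathfrak{g}^*]^G$ by the positive-characteristic analog of Chevalley's restriction theorem. The graded version of $\mu$ is then the multiplication map
\[
\bar \mu : k[\mathfrak{g}^{*(1)}] \otimes_{k[\mathfrak{g}^{*(1)}]^G} k[\mathfrak{g}^*]^G \longrightarrow \gr Z(\mathcal{U}(\mathfrak{g})) \subseteq k[\mathfrak{g}^*].
\]
Injectivity of the composition with the inclusion into $k[\mathfrak{g}^*]$ follows from Kostant's freeness theorem in very good characteristic: $k[\mathfrak{g}^*]$ is a free $k[\mathfrak{g}^*]^G$-module, and tensoring with the flat extension $k[\mathfrak{g}^{*(1)}] \supseteq k[\mathfrak{g}^{*(1)}]^G$ preserves injectivity.

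It remains to identify $\gr Z(\mathcal{U}(\mathfrak{g}))$ with the image of $\bar \mu$, equivalently with the Poisson center of $S(\mathfrak{g})$, which in characteristic $p$ is precisely $k[\mathfrak{g}^{*(1)}] \cdot k[\mathfrak{g}^*]^G$ (since $\{y, x^p\} = p x^{p-1}\{y,x\} = 0$ annihilates all $p$-th powers and $G$-invariants are Poisson-central by connectedness). The easy inclusion $\gr Z(\mathcal{U}(\mathfrak{g})) \subseteq$ Poisson center is the observation that a central element Poisson-commutes with $\mathfrak{g}$ after passing to the symbol. The reverse inclusion is where the main obstacle lies: one must show every Poisson-central element admits a central lift. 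I would do this by comparing ranks over the open locus of regular semisimple elements of $\mathfrak{g}^{*(1)}$, where the Azumaya property of $\mathcal{U}(\mathfrak{g})$ over its center (valid in very good characteristic) forces $Z(\mathcal{U}(\mathfrak{g}))$ to have the expected rank over $Z_{\mathrm{Fr}}$, matching the rank of the Poisson center and hence forcing equality. Once $\bar \mu$ is an isomorphism, a standard filtered-to-graded argument upgrades this to $\mu$ being an isomorphism of filtered algebras.
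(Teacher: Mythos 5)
The paper itself offers no argument here: the statement is quoted verbatim from Theorem 2 of \cite{centers} (Veldkamp's theorem in the form proved by Mirkovi\'c--Rumynin), so what you have written is an attempt to reprove that external theorem. Your outline follows the classical route, but it leaves precisely the hard points as assertions. First, the graded identity you rely on --- that the Poisson centre of $S(\mathfrak{g})$, i.e.\ $S(\mathfrak{g})^{\mathfrak{g}}$, equals $k[\mathfrak{g}^{*(1)}]\cdot k[\mathfrak{g}^*]^G$ --- is essentially the theorem itself in graded form (Veldkamp, Kac--Weisfeiler); your parenthetical justifies only the easy inclusion $\supseteq$, while the reverse inclusion is the genuine content and is nowhere proved. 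Second, $\gr Z_{\mathrm{HC}} = S(\mathfrak{g})^G$ is not an instance of Chevalley restriction: in characteristic $p$ the group $G$ is not linearly reductive and there is no symmetrization map, so the surjectivity of $\mathcal{U}(\mathfrak{g})^G$ onto $S(\mathfrak{g})^G$ degree by degree has to be argued (this is where the explicit construction of the generators $c_i$, respectively the Harish--Chandra isomorphism, enters in the literature).

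Third, the Azumaya rank-comparison you propose does not ``force equality'' as stated. Equality of generic ranks over the regular semisimple locus only shows that $Z_{\mathrm{Fr}}\otimes_{Z_{\mathrm{Fr}}^G}Z_{\mathrm{HC}}$ and $Z(\mathcal{U}(\mathfrak{g}))$ are finite over $Z_{\mathrm{Fr}}$ with the same fraction field; a finite birational inclusion of algebras can still be strict (compare $k[t^2,t^3]\subset k[t]$), and likewise equal generic rank of graded modules does not give equal Hilbert series. The published proofs close exactly this gap by an integrality/normality argument: one shows that $\Spec\bigl(Z_{\mathrm{Fr}}\otimes_{Z_{\mathrm{Fr}}^G}Z_{\mathrm{HC}}\bigr)$ (the Zassenhaus variety) is normal and that $Z(\mathcal{U}(\mathfrak{g}))$ is integral over $Z_{\mathrm{Fr}}$ inside the common fraction field, whence the inclusion is an equality. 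Without that ingredient (or, equivalently, without an honest proof of the graded Poisson-centre statement), your argument does not conclude; the injectivity half via freeness of $k[\mathfrak{g}^*]$ over $k[\mathfrak{g}^*]^G$ is fine, but the surjectivity half is exactly the content of the cited theorem and remains open in your sketch.
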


\begin{proof}
See Theorem 2 of \cite{centers}.
\end{proof}

Furthermore, we may make further characterizations of $Z_{\mathrm{Fr}}^G$ and $Z_{\mathrm{HC}}$ - the Harish-Chandra isomorphism allows us to understand $G$-invariant elements through the Cartan subalgebra $\mathfrak{h}$.

\begin{theorem}
We have an isomorphism
\begin{display}\Theta_{\mathrm{HC}}: Z_{\mathrm{HC}} \ar{r}{\simeq} & \mathrm{S}(\mathfrak{h})^{W, \bigcdot}.\end{display}
Moreover, the center $Z_{\mathrm{HC}}$ is a polynomial algebra $k[c_1, \ldots, c_n]$ for a set of generators $c_1,\ldots, c_n$ ($n=\rank \mathfrak{g}$). Furthermore, upon restricting to $Z_{\mathrm{Fr}}^G = Z_{\mathrm{Fr}} \cap Z_{\mathrm{HC}}$ we get an isomorphism
\begin{display}
Z_{\mathrm{Fr}}^G \ar{r}{\simeq} & \mathrm{S}(\mathfrak{h}^{(1)})^{W,\bigcdot}.
\end{display}
The map $\mathrm{S}(\mathfrak{h}^{(1)})^{W,\bigcdot}\to \mathrm{S}(\mathfrak{h})^{W,\bigcdot}$ is  the Artin-Schreier map.
\end{theorem}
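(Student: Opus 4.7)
The plan is to mimic the classical Harish-Chandra construction, adapted to positive characteristic. First I fix a triangular decomposition $\mathfrak{g} = \mathfrak{n}^- \oplus \mathfrak{h} \oplus \mathfrak{n}^+$; the PBW theorem then yields the vector-space decomposition $\mathcal{U}(\mathfrak{g}) = \mathrm{S}(\mathfrak{h}) \oplus (\mathfrak{n}^-\mathcal{U}(\mathfrak{g}) + \mathcal{U}(\mathfrak{g})\mathfrak{n}^+)$. Let $\pi\colon \mathcal{U}(\mathfrak{g}) \to \mathrm{S}(\mathfrak{h})$ denote the projection onto the first summand, and define $\Theta_{\mathrm{HC}}$ as $\pi|_{Z_{\mathrm{HC}}}$ post-composed with the $-\rho$ translation automorphism of $\mathrm{S}(\mathfrak{h})$, so that the standard $W$-action on the target corresponds to the dot action on the source.

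To establish injectivity I would use Verma modules: for $\lambda \in \mathfrak{h}^*$ and $z \in Z_{\mathrm{HC}}$, the element $z$ acts on $M(\lambda)$ by a scalar equal to $\pi(z)(\lambda)$, obtained by pushing $z$ past $\mathfrak{n}^+$ onto the highest-weight vector. Since $k$ is infinite, $\mathfrak{h}^*$ is Zariski dense in itself, so vanishing of the scalar for all $\lambda$ forces $\pi(z) = 0$ and hence $z=0$. For $W$-dot invariance of the image, I would use BGG-type linkage: on a Zariski-dense subset of weights (those $\lambda$ with $\langle \lambda + \rho, \alpha^\vee\rangle$ a positive integer in $k$) there is a nonzero map $M(s_\alpha \bigcdot \lambda) \to M(\lambda)$ for each simple root $\alpha$, forcing the central character to agree along the $W$-dot orbit.

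The main step is surjectivity, which I would obtain by passing to the associated graded with respect to the PBW filtration. The associated graded of $\Theta_{\mathrm{HC}}$ is the classical Chevalley restriction morphism $\mathrm{S}(\mathfrak{g})^G \to \mathrm{S}(\mathfrak{h})^W$; under the very-good-characteristic hypothesis this is an isomorphism (cf.\ \cite{jantzenbook}), and a standard filtered-to-graded lifting argument promotes this to surjectivity of $\Theta_{\mathrm{HC}}$ onto $\mathrm{S}(\mathfrak{h})^{W,\bigcdot}$, since the $\rho$-shift is a lower-order perturbation that does not change the associated graded. The polynomial structure $Z_{\mathrm{HC}} \cong k[c_1,\ldots,c_n]$ with $n = \rank \mathfrak{g}$ then follows from Chevalley-Shephard-Todd applied to the reflection group $W$ acting on $\mathfrak{h}$, since the dot action differs from the linear action only by an affine translation.

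For the $p$-center statement, a direct PBW computation shows that if $x \in \mathfrak{n}^\pm$ then $x^p - x^{[p]} \in \mathfrak{n}^-\mathcal{U}(\mathfrak{g}) + \mathcal{U}(\mathfrak{g})\mathfrak{n}^+$, while for toral $h \in \mathfrak{h}$ one has $\pi(h^p - h^{[p]}) = h^p - h^{[p]} \in \mathrm{S}(\mathfrak{h})$; this latter element is exactly the Artin-Schreier image of $h \otimes 1 \in \mathfrak{h}^{(1)}$ in $\mathrm{S}(\mathfrak{h})$. Combining with the already-established global isomorphism and intersecting with the $W$-dot invariants identifies $Z_{\mathrm{Fr}}^G$ with $\mathrm{S}(\mathfrak{h}^{(1)})^{W,\bigcdot}$ via the Artin-Schreier map. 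The main obstacle throughout is the surjectivity of the classical Chevalley restriction in characteristic $p$, which is exactly where the very-good hypothesis on $p$ is essential.
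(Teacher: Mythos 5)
Your plan reconstructs the classical Harish-Chandra strategy, but note first that the paper itself does not prove this statement: it quotes the first part from the cited reference (a Kac--Weisfeiler/Veldkamp-type theorem) and the $p$-center part from a lemma of the second reference. Measured as a self-contained argument, two of your steps break precisely in characteristic $p$. For the $W$-dot invariance, the locus of weights $\lambda$ with $\langle\lambda+\rho,\alpha^\vee\rangle$ ``a positive integer in $k$'' is the union of only $p$ hyperplanes $\langle\lambda+\rho,\alpha^\vee\rangle=c$, $c\in\F_p$; this is a proper closed subset, not Zariski dense (unlike characteristic $0$, where infinitely many integral hyperplanes are dense). For instance $\langle\lambda+\rho,\alpha^\vee\rangle^p-\langle\lambda+\rho,\alpha^\vee\rangle$ vanishes on all of it, so linkage on that locus does not force $\pi(z)(\lambda)=\pi(z)(s_\alpha\bigcdot\lambda)$ identically. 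The injectivity step is also circular as written: ``vanishing of the scalar for all $\lambda$ forces $\pi(z)=0$ and hence $z=0$'' --- the implication $\pi(z)=0\Rightarrow z=0$ \emph{is} the injectivity claim. The natural repair, faithfulness of the family of Verma modules, fails for $\mathcal{U}(\mathfrak{g})$ in characteristic $p$ (the central element $e_\alpha^p$ annihilates every $M(\lambda)$), so one must genuinely invoke $G$-invariance of $z$, the fact that every simple module is a $G$-twist of a highest-weight module, and semiprimitivity of $\mathcal{U}(\mathfrak{g})$, none of which appears in your sketch.

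The surjectivity step conceals the real difficulty. Identifying $\gr\Theta_{\mathrm{HC}}$ with the Chevalley restriction $\mathrm{S}(\mathfrak{g})^G\to\mathrm{S}(\mathfrak{h})^W$ presupposes $\gr\bigl(\mathcal{U}(\mathfrak{g})^G\bigr)=\mathrm{S}(\mathfrak{g})^G$. In characteristic $p$ there is no symmetrization splitting of the PBW filtration and taking $G$-invariants is not exact, so one only gets $\gr\bigl(\mathcal{U}(\mathfrak{g})^G\bigr)\subseteq\mathrm{S}(\mathfrak{g})^G$; the equality is essentially equivalent to the surjectivity you are trying to prove, and in the literature it is obtained from nontrivial input (good-filtration/cohomology-vanishing statements for $\mathrm{S}^d(\mathfrak{g})$ of the same flavour as the good filtration on $k[\mathcal{N}]$ that the paper flags as a hard fact, or a reduction from a $\Z$-form of $\mathcal{U}(\mathfrak{g})$). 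Without it, the filtered-to-graded argument yields injectivity of $\gr\Theta_{\mathrm{HC}}$ but not surjectivity. Finally, the appeal to Chevalley--Shephard--Todd needs $|W|$ invertible in $k$, which can fail even for very good $p$ (e.g.\ $G=\SL_{p+1}$ has $p\mid|W|$); one must instead use Demazure's theorem that $\mathrm{S}(\mathfrak{h})^W$ is polynomial away from torsion primes. The closing $p$-center computation ($\pi(h^p-h^{[p]})=h^p-h^{[p]}$ giving the Artin--Schreier map) is the right idea, but its surjectivity onto $\mathrm{S}(\mathfrak{h}^{(1)})^{W,\bigcdot}$ again relies on the unproven graded-invariants identification, this time for $Z_{\mathrm{Fr}}\simeq\mathrm{S}(\mathfrak{g}^{(1)})$.
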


\begin{proof}
The first claim is the main result of \cite{centeriso}. The second claim is Lemma 4 in \cite{centers}.
\end{proof}

When considering the image of $Z(\mathcal{U}(\mathfrak{g}))$ in $\mathcal{U}_\chi(\mathfrak{g})$, we may use these results to understand this as well.

\begin{lemma}
The image of the $Z(\mathcal{U}(\mathfrak{g}))$ in $\mathcal{U}_\chi(\mathfrak{g})$ is given by
\[k_\chi \otimes_{Z_{\mathrm{Fr}}^G} Z_{\mathrm{HC}},\]
where $k_\chi$ is the 1-dimensional algebra over $Z_{\mathrm{Fr}}$ induced by the character $\chi$.
\end{lemma}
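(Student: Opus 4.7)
The plan is to identify $k_\chi \otimes_{Z_{\mathrm{Fr}}^G} Z_{\mathrm{HC}}$ with the base change $Z(\mathcal{U}(\mathfrak{g})) \otimes_{Z_{\mathrm{Fr}}} k_\chi$, observe that the natural map from this base change to $\mathcal{U}_\chi(\mathfrak{g})$ has the expected image, and then show that it is injective.

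The defining relations $x^p - x^{[p]} - \chi(x)^p$ show that $\ker(\mathcal{U}(\mathfrak{g}) \twoheadrightarrow \mathcal{U}_\chi(\mathfrak{g})) = \m_{\chi,\mathrm{Fr}} \cdot \mathcal{U}(\mathfrak{g})$, where $\m_{\chi,\mathrm{Fr}} := \ker(Z_{\mathrm{Fr}} \to k_\chi)$, so $\mathcal{U}_\chi(\mathfrak{g}) = \mathcal{U}(\mathfrak{g}) \otimes_{Z_{\mathrm{Fr}}} k_\chi$. Applying $- \otimes_{Z_{\mathrm{Fr}}} k_\chi$ to the inclusion $Z(\mathcal{U}(\mathfrak{g})) \hookrightarrow \mathcal{U}(\mathfrak{g})$ and using the tensor decomposition of the previous theorem produces a natural map
\[k_\chi \otimes_{Z_{\mathrm{Fr}}^G} Z_{\mathrm{HC}} \;=\; Z(\mathcal{U}(\mathfrak{g})) \otimes_{Z_{\mathrm{Fr}}} k_\chi \;\longrightarrow\; \mathcal{U}(\mathfrak{g}) \otimes_{Z_{\mathrm{Fr}}} k_\chi \;=\; \mathcal{U}_\chi(\mathfrak{g}),\]
whose image is the image of $Z(\mathcal{U}(\mathfrak{g}))$ by construction.

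The content of the lemma is therefore the injectivity of this map, equivalent to the purity condition $Z(\mathcal{U}(\mathfrak{g})) \cap \m_{\chi,\mathrm{Fr}} \mathcal{U}(\mathfrak{g}) = \m_{\chi,\mathrm{Fr}} Z(\mathcal{U}(\mathfrak{g}))$, or to the vanishing of $\Tor_1^{Z_{\mathrm{Fr}}}(\mathcal{U}(\mathfrak{g})/Z(\mathcal{U}(\mathfrak{g})), k_\chi)$. I would deduce this from the fact that, in very good characteristic, $\mathcal{U}(\mathfrak{g})$ is a free module over $Z(\mathcal{U}(\mathfrak{g}))$ — a classical Azumaya-style ingredient underlying \cite{centers}. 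Choosing a $Z(\mathcal{U}(\mathfrak{g}))$-basis of $\mathcal{U}(\mathfrak{g})$ containing $1$ exhibits $Z(\mathcal{U}(\mathfrak{g}))$ as a direct summand of $\mathcal{U}(\mathfrak{g})$ as $Z_{\mathrm{Fr}}$-modules, and the split short exact sequence then base changes to an injection.

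The main obstacle is the $Z$-freeness step. If a direct citation proves delicate, I would fall back on comparing $Z_{\mathrm{Fr}}$-ranks: $\mathcal{U}(\mathfrak{g})$ is free of rank $p^{\dim \mathfrak{g}}$ over $Z_{\mathrm{Fr}}$ by PBW, and $Z(\mathcal{U}(\mathfrak{g}))$ is free of rank $p^{\dim\mathfrak{h}}$ over $Z_{\mathrm{Fr}}$ using that $Z_{\mathrm{HC}}$ is free over $Z_{\mathrm{Fr}}^G$ (via the Artin--Schreier presentation together with Chevalley--Shephard--Todd-type freeness). These ranks allow one to verify injectivity fiberwise on a dense open of $\Spec Z_{\mathrm{Fr}}$ and then spread out by flatness.
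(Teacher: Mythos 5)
Your reduction of the statement is sound, and it is actually more careful than the paper's own proof: the paper simply applies the quotient map to the decomposition $Z(\mathcal{U}(\mathfrak{g})) = Z_{\mathrm{Fr}} \otimes_{Z_{\mathrm{Fr}}^G} Z_{\mathrm{HC}}$, using that $\mathcal{U}_\chi(\mathfrak{g}) = \mathcal{U}(\mathfrak{g}) \otimes_{Z_{\mathrm{Fr}}} k_\chi$, which is exactly your first two paragraphs (the ``image'' statement). You correctly observe that identifying the image with $k_\chi \otimes_{Z_{\mathrm{Fr}}^G} Z_{\mathrm{HC}}$ as an algebra further requires injectivity of the base-changed map, i.e.\ $Z(\mathcal{U}(\mathfrak{g})) \cap \mathfrak{m}_{\chi}\mathcal{U}(\mathfrak{g}) = \mathfrak{m}_{\chi}Z(\mathcal{U}(\mathfrak{g}))$.

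However, your justification of that injectivity has a genuine gap. The key claim that $\mathcal{U}(\mathfrak{g})$ is a free module over its full center $Z(\mathcal{U}(\mathfrak{g}))$ is false: $\mathcal{U}(\mathfrak{g})$ is not even flat over $Z(\mathcal{U}(\mathfrak{g}))$, since the fiber dimensions over $\Spec Z(\mathcal{U}(\mathfrak{g}))$ jump. Already for $\mathfrak{g}=\sl$ the generic fiber has dimension $p^2$ (matrix algebra blocks for regular semisimple $\chi$), while over the point corresponding to $\chi=0$ and a regular block the fiber is $A_\alpha/(c-\alpha)A_\alpha$, of dimension $p^2+2\omega(p-\omega)>p^2$ (this is computed later in the paper). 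The Azumaya-type results of the cited literature hold only over the smooth locus of $\Spec Z(\mathcal{U}(\mathfrak{g}))$, so there is no $Z$-basis of $\mathcal{U}(\mathfrak{g})$ and hence no resulting $Z_{\mathrm{Fr}}$-module splitting of $Z(\mathcal{U}(\mathfrak{g}))\subset \mathcal{U}(\mathfrak{g})$. Your fallback does not close the gap either: the obstruction to injectivity after base change is $\Tor_1^{Z_{\mathrm{Fr}}}(\mathcal{U}(\mathfrak{g})/Z(\mathcal{U}(\mathfrak{g})), k_\chi)$, and freeness of $\mathcal{U}(\mathfrak{g})$ and of $Z(\mathcal{U}(\mathfrak{g}))$ over $Z_{\mathrm{Fr}}$ together with a rank count says nothing about flatness of the quotient; injectivity on a dense open of $\Spec Z_{\mathrm{Fr}}$ does not propagate to special fibers, since the dimension of the image can drop there --- that is precisely the delicate point. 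A correct argument needs a real input, e.g.\ Veldkamp/Premet--Tange-type results that $Z(\mathcal{U}(\mathfrak{g}))$ is $Z_{\mathrm{Fr}}$-free on the monomials $c_1^{a_1}\cdots c_n^{a_n}$, $0\le a_i<p$, whose images remain linearly independent in $\gr\mathcal{U}_\chi(\mathfrak{g})$ (via freeness of $S(\mathfrak{g})$ over the relevant invariants), or the coinvariant-algebra statements of the sources cited later in the paper; alternatively, for the $\sl$ case actually used, a direct check that $1,c,\dots,c^{p-1}$ are independent in $\mathcal{U}_\chi(\sl)$ via the PBW basis.
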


\begin{proof}
This follows from applying the quotient map to $Z(\mathcal{U}(\mathfrak{g})) = Z_{\mathrm{Fr}} \otimes_{Z_{\mathrm{Fr}}^G} Z_{\mathrm{HC}}$.
\end{proof}

\subsection{The block decomposition}
\label{sect: block_decomp}

Suppose $G$ is a reductive algebraic group. We may decompose $\Rep(G)$ as a direct sum of smaller categories, which creates the block decomposition $\mathcal{U}_\chi(\mathfrak{g})=\bigoplus_\lambda A_\lambda$ that we study. Before stating this decomposition, we need to define the notion of a Serre subcategory.

\begin{definition}
Let $\mathcal{A}$ be an abelian category. A Serre subcategory $\mathcal{S}\subseteq \mathcal{A}$ is a non-empty full subcategory so that for any exact sequence
\begin{display}
0 \ar{r} & A \ar{r} & B \ar{r} & C \ar{r} & 0
\end{display}
where $A,C \in \mathcal{S}$ if and only if $B\in \mathcal{S}$.
\end{definition}

\begin{proposition-N}
Let $\mathfrak{X}$ denote the weight lattice of $G$, $\mathfrak{X}_+\subset \mathfrak{X}$ be the subset of positive weights, $W^{\mathrm{aff}}$ the affine Weyl group, and $\bigcdot_p$ the $p$-dilated dot action of $W^{\mathrm{aff}}$. We have a decomposition
\[\Rep(G) = \bigoplus_{\lambda \in \mathfrak{X}/(W^{\mathrm{aff}},\bigcdot_p)} \Rep(G)_\lambda.\]
Here, $\Rep(G)_\lambda$ is the Serre subcategory generated by simple modules $L_{\lambda'}$ for $\lambda'$ in $(W^{\mathrm{aff}} \bigcdot_p \lambda) \cap \mathfrak{X}_+$.
\label{alg_group_blocks}
\end{proposition-N}

\begin{proof}
This is shown in \cite{donkin1980blocks}, and discussed in \cite{ciappara2020lectures}.
\end{proof}

We note that when we refer to $\Rep(G)_\lambda$ we fix a fundamental domain for $\mathfrak{X}/(W^{\mathrm{aff}}, \bigcdot_p)$. This applies to all other block decompositions as well. When explicitly needed, we will give a description of this fundamental domain. 

The similar results hold for $\Rep(\mathfrak{g})$.

\begin{proposition-N}
Let $\mathfrak{X}$ denote the weight lattice of $G$, $W$ the finite Weyl group, and $\bigcdot$ the dot action of $W$. We have a decomposition
\[\Rep(\mathfrak{g}) = \bigoplus_{\lambda \in  \mathfrak{h}^*/(W,\bigcdot)} \Rep(\mathfrak{g})_\lambda.\]
Here, $\Rep(\mathfrak{g})_\lambda$ is the Serre subcategory generated by the simple modules $L_{\lambda'}$ for $\lambda'$ in $(W \bigcdot \lambda) \subset\mathfrak{h}^*$.
\label{liealgblocks}
\end{proposition-N}

In the case $\lambda = w \cdot \mu$ for $w\in W$, we say that $\lambda, \mu$ are $W$-linked. Letting $\chi \in \mathfrak{g}^*$ be a character of $\mathfrak{g}$, we may pass to the reduced universal enveloping algebra \[\mathcal{U}_\chi(\mathfrak{g}) = \mathcal{U}(\mathfrak{g})/\langle x^p-x^{[p]} - \chi(x)^p \rangle.\]
Through the universal property of $\mathcal{U}(\mathfrak{g})$, we have a bijection
\begin{display}
\{\mathfrak{g} - \mathrm{modules}\} \ar[r,leftrightarrow] & \{\mathcal{U}(\mathfrak{g}) - \mathrm{modules}\}.
\end{display}
This restricts to $\mathcal{U}_\chi(\mathfrak{g})$-modules when we consider $\mathfrak{g}$-modules with $p$-character $\chi$. 

Restricting the result of Proposition \ref{liealgblocks} to the subcategory $\Rep(\mathfrak{g})_\chi$ of $\mathfrak{g}$-modules with a given character $p$-character $\chi$, we obtain a block decomposition of $\Rep(\mathfrak{g})_\chi$. The  simple modules in $\Rep(\mathfrak{g})_\chi$ are constructed as the quotients of the so-called baby Verma modules given by $\Delta_{\chi,\lambda} := \mathcal{U}_\chi(\mathfrak{g}) \otimes_{\mathcal{U}_\chi(\mathfrak{b}^+)} k_\lambda$. It is known that all simple $\mathcal{U}_\chi(\mathfrak{g})$-modules arise in this way, see \cite{humphreys1998modular}.

We have a block decomposition in this case, which is simply an algebraic fact. One may define a linkage relation among principal indecomposable modules for $\mathcal{U}_\chi(\mathfrak{g})$, as in \cite{curtis1966representation} \S 55.

\begin{theorem-N}
We have a decomposition
\[\Rep(\mathfrak{g})_\chi = \bigoplus_{\lambda} A_\lambda - \Mod\]
where $A_\lambda$ is the two-sided ideal of $\mathcal{U}_\chi(\mathfrak{g})$ equal to the sum of all principal indecomposable modules belonging to an equivalence class under the linkage relation.
\label{liealgcharblocks}
\end{theorem-N}

\begin{proof}
See \cite{curtis1966representation}, Theorem 55.2 in \S 55.
\end{proof}

The blocks can again be viewed as Serre subcategories for corresponding simple modules. In the case of $G=\SL_2$, we know this decomposition explicitly because there is a complete understanding of the baby Verma modules and simple modules for any character $\chi$.

A more precise result is an explicit description of the blocks in terms of $W$-linkage classes. We may explicitly characterize the blocks by the associated baby Verma modules. We assume $\mathfrak{g}$ is semisimple, and define a modified weight lattice $\mathfrak{X}_\chi$ as in \cite{jantzen1998representations} \S 6.2 via
\[\mathfrak{X}_\chi = \{ \lambda \in \mathfrak{h}^* : \lambda(h)^p-\lambda(h^{[p]}) = \chi(h)^p \text{ for all } h\in \mathfrak{h} \}.\]
We have $|\mathfrak{X}_\chi| = p^{\dim \mathfrak{h}}$. This will index all of the baby Verma modules $\Delta_{\chi, \lambda}$.

\begin{theorem-N}
For $\chi \in \mathfrak{g}^*$, the blocks of $\mathcal{U}_\chi(\mathfrak{g})$ are in natural bijection with the $W$-linkage classes of weights in $\mathfrak{X}_\chi$.
\label{blocks_and_W-linkage}
\end{theorem-N}

\begin{proof}
See \cite{brownblocks} \S 3.18.
\end{proof}

We note that the Harish-Chandra center of $\mathfrak{g}$ and, in particular, the elements $c_1,\ldots, c_n$, act by a constant on the simple modules within a given block. For $\sl$ this action uniquely determines the block and thus we will sometimes refer to the block by the constant by which the generator of the Harish-Chandra center acts on the simples in the block.

It is worth noting that in characteristic $p$, the differentiation functor
\begin{display}
D: \Rep(G) \ar{r} & \Rep(\mathfrak{g})
\end{display}
induces an equivalence of categories between $\Rep(G_1)$ and $\mathcal{U}_0(\mathfrak{g})$-modules, so in this case Theorem \ref{blocks_and_W-linkage} is an immediate consequence of a block decomposition on $\Rep(G_1)$. Here, $G_1$ is the kernel of the Frobenius twist $\mathrm{Fr}: G \to G^{(1)}$. This illustrates the connection between the two block decompositions.

Given this block decomposition, we may decompose the algebra $\mathcal{U}_\chi(\mathfrak{g})$ into blocks associated to each Serre subcategory. In particular, we may write
\[\mathcal{U}_\chi(\mathfrak{g}) = \bigoplus_{\lambda \in \mathfrak{X}_\chi/(W,\bigcdot_p)} A_\lambda\]
where nonzero element $x\in A_\lambda$ if it acts nontrivially on $\Rep(\mathfrak{g})_{\chi,\lambda}$ and trivially on all the other subcategories in the decomposition of Theorem \ref{blocks_and_W-linkage}. 

For any decomposition of an algebra into direct summands, there exists a corresponding collection of central idempotents. In particular, the decomposition $\mathcal{U}_\chi(\mathfrak{g}) = \bigoplus_\lambda A_\lambda$ can be written as
\[\mathcal{U}_\chi(\mathfrak{g}) = \bigoplus_\lambda \pi_\lambda \mathcal{U}_\chi(\mathfrak{g}),\]
where $\pi_\lambda$ are central idempotents. Note that due to the result of \cite{centralidempotents}, the central idempotents of $\mathcal{U}_\chi(\mathfrak{g})$ lie in the image of the center of $\mathcal{U}(\mathfrak{g})$. This allows to interpret the projection maps onto each block as multiplication by $\pi_\lambda$.

\begin{remark}
By abuse of notation we will refer to the projection maps onto blocks as well as the central idempotents in the above decomposition by $\pi_\lambda$.
\end{remark}

Recall from the introduction that there is a natural $G$-action on $\mathcal{U}_0(\mathfrak{g})$ induced by the adjoint action of $G$. Since the central idempotents $\pi_\lambda$ are $G$-invariant, the corresponding projections are $G$-equivariant and the adjoint action preserves the subalgebras $A_\lambda$.  

\subsection{Representation theory of \texorpdfstring{$\SL_2$}{SL2} and \texorpdfstring{$\sl$}{sl2}}

In this subsection, we briefly review the representation theory of $\SL_2$ and $\sl$ in characteristic $p$. A good reference for this is \cite{sl2reps}. This is also summarized (on a less detailed level) in \cite{humphreys1998modular} and \cite{jantzen1998representations}. Throughout this subsection, we will need $p>2$ for these results to hold.

The flag variety of $\SL_2$ is $\mathbf{P}^1$. This group has weight lattice $\mathfrak{X}\simeq \Z$ and $\rho=1$. We construct representations of $\SL_2$ known as Weyl modules - these are defined as $\nabla_\lambda = \H^0(\mathbf{P}^1, \O(\lambda))$. We can identify this with
\[\nabla_\lambda = k[x,y]_{\lambda} = k x^\lambda \oplus kx^{\lambda-1} y \oplus \ldots \oplus k x y^{\lambda-1} \oplus k y^\lambda\]
where $x,y$ are homogeneous coordinates for $\mathbf{P}^1$ and the subscript denotes the degree $\lambda$ part of the grading on $k[x,y]$. We note that $\nabla_\lambda$ are indexed by $\mathfrak{X}_+$ and the Weyl modules are defined for  a general reductive algebraic group in a similar fashion.

\begin{lemma-N}
For $\lambda < p$, the module $\nabla_\lambda$ is simple. For $\lambda \ge p$, it is no longer simple, with simple socle $L_\lambda$. We have 
\[L_{\mu p+\lambda} \simeq L_\lambda \otimes\mathrm{Fr}^* L_\mu.\]
As a submodule of $\nabla_\lambda$, we may embed $L_\lambda$ as $G\cdot x^\lambda \subseteq \nabla_\lambda$.
\label{frob_simple_twist}
\end{lemma-N}





Next, we discuss the representations of $\sl$. We first need to choose a basis, which will be fixed for the rest of the paper.

\begin{definition}
Let $e=\begin{pmatrix} 0 & 1 \\ 0 & 0 \end{pmatrix}$, $f=\begin{pmatrix} 0 & 0 \\ 1 & 0 \end{pmatrix}$, and $h=\begin{pmatrix} 1 & 0 \\ 0 & -1 \end{pmatrix}$. These form a basis for $\sl$, and have the relations
\[[e,f]=h, [h,f]=-2f, [h,e]=2e.\]
\end{definition}

Because we assume $p>2$, the trace form $4\mathrm{tr}(xy)$ is non-degenerate and hence we have $\sl \simeq \sl^*$. Thus, when classifying characters we may consider the associated elements in $\sl$. We may divide them into three categories, up to conjugacy by $\SL_2$:

\begin{lemma-N}
We may divide the characters of $\sl$ into three cases up to conjugacy:
\begin{enumerate}
    \item $\chi=0$. 
    \item $\chi\neq 0$ but nilpotent. We pick $\chi = e$ as a representative.
    \item $\chi$ regular. We pick $ah/2$ with $a\neq 0$ as a representative.
\end{enumerate}
\label{charactercases}
\end{lemma-N}

For each of these cases, we have a corresponding classification of simple $\mathcal{U}_\chi(\sl)$ modules, or simple $\sl$-modules with $p$-character $\chi$. To distinguish from the notation for $\SL_2$-modules, we add the subscript $\chi$ to denote which character we are using. We state with these the grouping into different blocks following Theorem \ref{blocks_and_W-linkage}.

\begin{lemma-N}
We have three cases for the simple $\mathcal{U}_\chi(\sl)$ modules corresponding to Lemma \ref{charactercases}. 
\begin{enumerate}
    \item When $\chi=0$, there are $p$ nonisomorphic simple modules $L_{0,\lambda}$ for $0\le \lambda < p$ which are quotients of the baby Verma modules via the exact sequences
    \begin{display}
    0 \ar{r} & L_{0,-\lambda-2} \ar{r} & \Delta_{0, \lambda} \ar{r} & L_{0,\lambda} \ar{r} & 0.
    \end{display}
    We group $L_{0,\lambda}, L_{0,-\lambda -2}$ together in the block decomposition for $\mathcal{U}_0(\sl)$, meaning we have a single block at with a single simple module at $\lambda=p-1$ called the Steinberg module and $(p-1)/2$ blocks with two simple modules.
    \item When $\chi=e$, all of the baby Verma modules are simple. We have $(p+1)/2$ non-isomorphic baby Verma modules. We have $\Delta_{e,\lambda} \simeq \Delta_{e,-\lambda-2}$ over $\lambda \in \F_p$. Each block has a single simple module.
    \item When $\chi = a h/2$ is regular, we have all $\Delta_{a h/2,\lambda}$ simple over $\lambda$ roots of $x^p-x=a$ and they are non-isomorphic. In the block decomposition, each block has a single simple module.
\end{enumerate}
\label{sl2_block_decomp_cases}
\end{lemma-N}

In our computations, we will also need some understanding of the projective covers of $L_{\chi,\lambda}$. 

\begin{lemma-N} We have three cases.
\begin{enumerate}
    \item $\chi=0$: Each simple $L_{0,\lambda}$ for $\lambda \in \F_p$ (as in Lemma \ref{sl2_block_decomp_cases}) has a projective cover. The simple module $L_{0,p-1}=P_{0,p-1}$ is the unique simple projective, and every other $L_{0,\lambda}$ has a dimension $2p$ indecomposable projective cover $P_{0,\lambda}$ with composition factors $L_{0,\lambda}, L_{0,p-\lambda-2}$. 
    \item $\chi=e$: Each $\Delta_{e,\lambda}$ has an indecomposable projective cover $P_{e,\lambda}$ of dimension $2p$ which is a self-extension.
    \item $\chi = a h/2$: Each simple module is projective.
\end{enumerate}

\label{projective_covers}
\end{lemma-N}
\begin{proof}
See \S 13 of \cite{humphreys1998modular}.
\end{proof}

The representations of $\SL_2$ and $\sl$ are related by differentiation functor in the following way.

\begin{lemma-N}
We have $D(L_\lambda) \simeq L_{0,\lambda}$ for $\lambda < p$, and $D(\mathrm{Fr}^* L_\mu)$ is a trivial representation so that in general
\[D(L_{\mu p + \lambda}) \simeq L_{0,\lambda}^{\oplus\mu+1}.\]

\label{diff_simples}
\end{lemma-N}

\begin{proof}
See \cite{sl2reps} \S 2.4.
\end{proof}

Using the following lemma, we can determine bases for $\mathcal{U}_\chi(\sl)$ and write down the kernel of $\mathcal{U}(\mathfrak{g}) \to \mathcal{U}_\chi(\mathfrak{g})$.

\begin{lemma-N}
For any character $\chi$, we have $e^i f^j h^k$ for $0 \le i,j,k < p$ as a basis for $\mathcal{U}_\chi(\mathfrak{g})$. Depending on the character, we have three possibilities for the ideal $\ker(\mathcal{U}(\mathfrak{g}) \to \mathcal{U}_\chi(\mathfrak{g}))$.
\begin{enumerate}
    \item For $\chi=0$, it is $\langle e^p, f^p, h^p-h \rangle$.
    \item For $\chi=e$, it is $\langle e^p, f^p-1, h^p-h \rangle$.
    \item For $\chi=\frac{a h}{2}$, it is $\langle e^p, f^p, h^p-h-a \rangle$.
\end{enumerate}
\label{restricted_basis}
\end{lemma-N}

Finally, we discuss the center of $\mathcal{U}(\sl)$. As discussed above, this is generated by $Z_{\mathrm{Fr}}$ and $Z_{\mathrm{HC}}$. The lemma above essentially tells us the form of $Z_{\mathrm{Fr}}$. As for $Z_{\mathrm{HC}}$, we observe that for $\sl$ the Cartan subalgebra $\mathfrak{h}$ is simply the span of $h$. Then we have $\mathrm{S}(\mathfrak{h}) \simeq k[h]$, and thus
\[Z_{\mathrm{HC}} \simeq k[h]^{(W,\bigcdot)}.\]
The finite Weyl group for $\sl$ is isomorphic to $\Z/2\Z$, and the dot action sends $h \mapsto -h-2$. Thus, $Z_{\mathrm{HC}}$ is isomorphic to $k[(h-1)^2]$, and has a single generator. For our convenience we will denote $h-1$ by $h'$.

\begin{lemma-N}
Under the Harish-Chandra isomorphism for $\sl$, the element $c = (h-1)^2 + 4ef \in Z_{\mathrm{HC}}$ maps to $h'^2$ in $k[h'^2]$.
\label{HCcenter}
\end{lemma-N}

\noindent This result tells us that $Z_{\mathrm{HC}}$ for $\sl$ is $k[c] \subset \mathcal{U}(\sl)$. This is not the usual quadratic Casimir element $\Omega = ef + fe + h^2/2$ for $\sl$, but rather $2\Omega+1$. For the convenience of notations and computations we shift it so that it vanishes at the fixed point of the $\bigcdot$-action.

\section{Filtrations on block subalgebras}

\subsection{Filtrations of interest} \label{sect: filtration_def}
We will be discussing three filtrations of interest in this section.  First, recall the definition of the PBW filtration.

\begin{definition}
The Poincar\'{e}-Birkhoff-Witt (PBW) filtration on $\mathcal{U}(\mathfrak{g})$ is the filtration $F_0 \subset F_1 \subset F_2 \subset \ldots$ with $F_d$ given by the degree $\le d$ elements with the degree induced from the tensor algebra.
\end{definition}

If we choose a basis $x_i$ of $\mathfrak{g}$ then we have monomial basis $\{\prod_i x_i^{e_i}\}$ of $\mathcal{U}(\mathfrak{g})$ as a vector space over $k$ by the PBW theorem. Then $F_d$ is the subspace of $\mathcal{U}(\mathfrak{g})$ spanned by elements $\prod_i x_i^{e_i}$ with  $\sum_i e_i \le d$. The filtration is compatible with multiplication in a sense that $F_i\cdot F_j\subset F_{i+j}$ and is uniquely characterized by this property together with $F_0=k\cdot 1, F_1=k\cdot 1+\mathfrak{g}\subset \mathcal{U}(\mathfrak{g})$. It is also preserved by the adjoint action of $G$ on $\mathcal{U}(\mathfrak{g})$. Finally, for the associated graded algebra of this filtration we have an isomorphism $\gr  \mathcal{U}(\mathfrak{g})\simeq S(\mathfrak{g})$.

We obtain a filtration on $\mathcal{U}_\chi(\mathfrak{g})$ by applying the quotient map to each vector space in the filtration. From this point onward, we will denote this PBW filtration on $\mathcal{U}_\chi(\mathfrak{g})$ by $V_d$. The associated graded algebra of this filtration is isomorphic to $k[\mathfrak{g}_p]$, where $\mathfrak{g}_p$ is the Frobenius neighbourhood of $0$ inside $\mathfrak{g}$, which is defined to be the preimage of $0$ under the Frobenius map $\mathrm{Fr}: \mathfrak{g}\to\mathfrak{g}^{(1)}$. Inside $\mathfrak{g}$ it is given by the ideal generated by the elements $x_i^p$, where $x_i$ runs through a basis of $\mathfrak{g}$.

\begin{definition-N} 
There are two natural ways we put a filtration on a block $A_\lambda$:
\begin{enumerate}
    \item The pushforward PBW filtration is given by $V_i^{\mathrm{pf}} := \pi_\lambda(V_i)$.
    \item The intersection PBW filtration is given by $V_i^{\mathrm{int}} := V_i \cap A_\lambda$.
\end{enumerate}
\label{pbw_on_blocks_def}
\end{definition-N}

We note that both filtrations are compatible with multiplication and the pushforward filtration is uniquely characterized by this together with $V_0^{\mathrm{pf}}=k\cdot \pi_\lambda$ and $V_1^{\mathrm{pf}}=k\cdot \pi_\lambda +\pi_\lambda \mathfrak{g}$. 

The last filtration we consider is a shifted PBW filtration.

\begin{definition-N}
 Define a filtration $V_i^{\mathrm{sh}}$ on the block $A_\lambda$ inductively via
\[V_i^{\mathrm{sh}} := \pi_\lambda(V_d) + V_{i-1}^{\mathrm{sh}}+ \sum_{j} c_j V_{i-1}^{\mathrm{sh}},\]
where the $c_j$ are as in \S \ref{sect: center}.
\label{shifted_pbw_def}
\end{definition-N}

This last filtration is also compatible with multiplication and is uniquely characterized by this together with $V_0^{\mathrm{sh}}=k\cdot \pi_\lambda, V_1^{\mathrm{sh}}=k\cdot \pi_\lambda +\pi_\lambda \mathfrak{g}+\mathrm{span}_j(\pi_\lambda c_j)$. 

The associated graded algebras for these on $A_\lambda$ are denoted by $\gr_{\mathrm{pf}} A_\lambda, \gr_{\mathrm{int}} A_\lambda, \gr_{\mathrm{sh}} A_\lambda$ respectively. Finally, note that for $\chi=0$ all three filtrations as well as $V_i$ are preserved by the $G$-actions.

\subsection{The shifted PBW filtration and the nilpotent cone}
\label{sect: shifted_PBW}

In this subsection we study the relation between the nilpotent cone $\mathcal{N}$ and the associated graded algebra of $A_\lambda$ with respect to the shifted PBW filtration $V_i^{\mathrm{sh}}$. 

Throughout this subsection, we let $G$ be a semisimple connected simply connected linear algebraic group with Lie algebra $\mathfrak{g}$. We additionally impose the following assumptions on $G$:
\begin{enumerate}
    \item The derived group $\mathcal{D}G$ of $G$ is simply connected.
    \item $p$ is odd and a good prime for $G$ (as in \S \ref{sect: center})
    \item The Killing form on $\mathfrak{g}$ is non-degenerate.
\end{enumerate}
Recall that $\mathcal{D}G$ is the intersection of all normal algebraic subgroups $N \le G$ so that $G/N$ is commutative. It can be viewed as the algebraic group analogue to the commutator subgroup $[G,G]$. With these conditions on $G$, we may use the relevant results for $G$ - we note that for $G=\SL_2$, all these conditions apply, so any theorems of this section may be used for $\sl$ in characteristic $p>2$. 

\begin{definition}
By abuse of notation we identify generators $c_i$ of the Harish-Chandra center with their image in the associated graded algebra $S(\mathfrak{g})$. We define the nilpotent cone $\mathcal{N}$ to be a closed subscheme $\Spec \mathrm{S}(\mathfrak{g})/\langle c_1, \ldots, c_n \rangle$ of $\mathfrak{g}$ considered as an affine space. 
\label{nilconegens}
\end{definition}

Under our assumptions on $G$ the set of closed points of $\mathcal{N}$ is the set of nilpotent elements of $\mathfrak{g}$ (see for example Theorem 2.6, \cite{friedlander1986cohomology}). Note also, that the adjoint $G$-action preserves $\mathcal{N}$.

As we work in characteristic $p>0$, we have a morphism
\begin{display}
\mathrm{Fr}: \mathcal{N} \ar{r} & \mathcal{N}^{(1)}.
\end{display}
We define the Frobenius neighborhood of $0$ in $\mathcal{N}$, denoted by $\mathcal{N}_p$, to be the preimage of a point $0\in \mathcal{N}^{(1)}$ under this map. This can be written as $\mathcal{N}_p \simeq \Spec k[\mathfrak{g}_p]/\langle c_1, \ldots, c_n \rangle$.

For this subsection, we fix a block $A_\lambda$ throughout. Let $\alpha_i$ be the constant by which the generator of the center $c_i$ acts on the simple modules associated to the block $A_\lambda$.


We put $C_\lambda$ for the image of the center of $\mathcal{U}(\mathfrak{g})$ in $A_\lambda$, which is known as the algebra of coinvariants. 

For a root $\mu$ of $G$ let $\mathfrak{g}_\mu$ be a corresponding root space. For a positive root $\mu$ of $G$ we let $h_\mu\in [\mathfrak{g}_\mu, \mathfrak{g}_{-\mu}]$ to be the unique element of $\mathfrak{h}$ so that $\mu(h_\mu)=2$ (recall $p>2$). We now have the following lemma, which is a combination of results from \cite{centers} and \cite{brownblocks}.

\begin{lemma-N}
The algebra $C_\lambda$ is a local ring. Moreover,
\begin{enumerate}
    \item For $\chi$ nilpotent, assigning degree $1$ to the elements $\pi_\lambda(c_i-\alpha_i)$ provides a grading on $C_\lambda$.
    \item We have $C_\lambda= k$ if and only if we have $\chi, \lambda$, such that $(\lambda + \rho)(h_\mu) \not \in \F_p^\times$ for all simple roots $\mu$. In particular, we will endow it with the trivial grading.
\end{enumerate}
\label{harishchandrakernel}
\end{lemma-N}

\begin{proof}
Let $\mathfrak{X}$ denote the weight lattice of $G$. Let $W(\lambda)$ be the stabilizer of $\lambda$ under the action of the finite Weyl group $W$, and $W(\lambda+\mathfrak{X})$ the stabilizer of $\lambda+\mathfrak{X}$, consisting of $w\in W$ so $w\lambda - \lambda \in \mathfrak{X}$. Given $\chi \in \mathfrak{g}^*$, $\chi^p$ is a character of the Frobenius twist $\mathfrak{g}^{*(1)}$. As discussed in \cite{centers}, this $p$th power has a Jordan decomposition $\chi^p = \chi_s + \chi_n$. In particular, by the result of \cite{centers} and the discussion of \S 3.8 in \cite{brownblocks} we have an isomorphism
\[C_\lambda\simeq \mathrm{S}(\mathfrak{h}^{(1)})^{W(\lambda)} \otimes_{\mathrm{S}(\mathfrak{h}^{(1)})^{W(\lambda+\mathfrak{X})}} k_{\chi_s}.\]
As $W(\lambda+\mathfrak{X})$ fixes $\chi_s$ this will be a local ring. Now further suppose $\chi$ is nilpotent. This identification shows that the algebra of coinvariants is graded because $\chi_s=0$ in this case. Explicitly identifying generators, we see that the grading is the degree grading in $c_i-\alpha_i$.

The second case follows from the theorem of \S 3.10 of \cite{brownblocks}, since the condition of part (2) of the theorem is equivalent to the algebra of coinvariants being $k$.
\end{proof}

\begin{remark}
For a regular semisimple character each block satisfies the condition of case (2) of the Lemma. The singular block (stabilized by the $W$-action) satisfies the condition of case (2) of the Lemma as well.
\end{remark}

In either case, the above Lemma implies that the inclusion $C_\lambda \to A_\lambda$ after passing to associated graded algebra becomes an injective map $C_\lambda \to \gr_{\mathrm{sh}} A_\lambda$ of graded algebras. In the situation of case (2) we have $V_i^{\mathrm{sh}}=V_i^{\mathrm{pf}}$, since the coinvariant algebra is trivial and so $\pi_\lambda c_j$ is a scalar element of $A_\lambda$.

Since we have $\pi_\lambda(V_i)\subset V_i^{\mathrm{sh}}$, there is a well defined map $\gr \mathcal{U}_\chi(\mathfrak{g})\to \gr_{\mathrm{sh}} A_\lambda$. Note that, since $G$ is semisimple, the images of the central elements $c_i$ have smaller degree with respect to the shifted filtration then with respect to pushforward filtration and, thus, the image of the ideal $\langle c_1, \ldots, c_n \rangle$ in $\gr \mathcal{U}_\chi(\mathfrak{g})$ lies in the kernel of the map. It follows that the above map quotients through $k[\mathcal{N}_p] \simeq k[\mathfrak{g}_p]/\langle c_1, \ldots, c_n \rangle$ since $k[\mathfrak{g}_p] \simeq \gr \mathcal{U}_\chi(\mathfrak{g})$. We then obtain a map
\begin{display}
k[\mathcal{N}_p] \ar{r} & \gr_{\mathrm{sh}} A_\lambda.
\end{display}
Furthermore, recall from the end of \S \ref{sect: block_decomp} that if $\chi=0$ there are $G$-actions on $\mathcal{U}_\chi(\mathfrak{g})$ and $A_\lambda$ and  the map $\mathcal{U}_\chi(\mathfrak{g})\to A_\lambda$ is $G$-equivariant and then so is the map $k[\mathcal{N}_p]\to \gr_{\mathrm{sh}} A_\lambda$.

Putting these maps together, we obtain a map from $k[\mathcal{N}_p] \otimes_k C_\lambda$ to $\gr_{\mathrm{sh}} A_\lambda$. We may then show the following theorem.

\begin{theorem-N}
Assume we are in either case (1) or (2) of Lemma \ref{harishchandrakernel}. We have a well defined surjective map
\begin{display}
k[\mathcal{N}_p] \otimes_k C_\lambda \ar[two heads]{r}& \gr_{\mathrm{sh}} A_\lambda,
\end{display}
of graded algebras. In particular, in case (2) of the above Lemma, we get a surjective map
\begin{display}
k[\mathcal{N}_p] \ar[two heads]{r}& \gr_{\mathrm{sh}} A_\lambda = \gr_{\mathrm{pf}} A_\lambda.
\end{display}
Furthermore, for $\chi=0$ the map is $G$-equivariant, with the $G$-action on $C_\lambda$ defined to be trivial.
\label{subschemestruct}
\end{theorem-N}

\begin{proof}
It remains to prove the surjectivity. The algebra $\gr_{\mathrm{sh}} A_\lambda$ is generated by the images of $c_j$ and $\mathfrak{g}\subset\mathcal{U}(\mathfrak{g})$ in it. The first ones lie in the image of $C_\lambda$ and the second ones lie in the image of $k[\mathcal{N}_p]$ as degrees of $c_j$ are bigger then $1$ and so $\langle c_1, \ldots, c_n \rangle\cap\mathfrak{g}=0$.
\end{proof}

Given that $C_\lambda$ is by definition a quotient of a polynomial $k$-algebra $Z_{\mathrm{HC}} \simeq \mathrm{S}(\mathfrak{h})^{W,\bigcdot}$, we may interpret its spectrum as a subscheme $X_\lambda$ of the affine space $\mathfrak{h}/W$. Given that $\mathcal{N}$ and hence $\mathcal{N}_p$ are affine schemes, we can interpret the tensor product as a fibre product of these schemes. The surjection onto the associated graded algebra tells us that 
\[\Spec(\gr_{\mathrm{sh}} A_\lambda) \subset \mathcal{N}_p \times_k X_\lambda\] 
can be realized as a closed subscheme of $\mathcal{N}_p \times_k X_\lambda$. Note that for case (2) of Lemma \ref{harishchandrakernel} $X_\lambda$ is a point and for a nilpotent character it is the spectrum of a local ring, so it has a unique closed point.

Let $W^\lambda$ be the set of minimal coset representatives for the subgroup $W(\lambda)$ (the stabilizer of $\lambda$). By \cite{brownblocks} \S 3.19, we, in addition, know the Hilbert series for $C_\lambda$, namely
\[H(X_\lambda,t) = \sum_{w\in W^{\lambda}} t^{\ell(w)}.\] In particular, we have $\dim C_\lambda = [W(\lambda+\mathfrak{X}) : W(\lambda)]$.

In the case of $\mathfrak{g}=\sl$ the scheme $X_\lambda$ is equal to $\Spec k$ or $\Spec k[\varepsilon]/\varepsilon^2$, with $\mathcal{N}_p\times_k X_\lambda$ being either $\mathcal{N}_p$ or a first order thickening of it. Geometrically, we aim to explicitly describe the ideal sheaves of $\gr A_\lambda$ as subschemes in the case of $\chi=0$. This is the same as determining the kernel of the map in Theorem \ref{subschemestruct}, which in the $\chi=0$ case is $G$-invariant. Once we decompose the ring of functions on the nilpotent cone into $G$-modules we may compute $k[\mathcal{N}_p] \otimes_k C_\lambda$ as a $G$-module.

\subsection{Duality between pushforward and intersection PBW filtrations}

In this section we show that the filtrations $V_i^{\mathrm{pf}}$ and $V_i^{\mathrm{int}}$ have a simple relationship, allowing us to only focus on the pushforward filtration.

The precise relation between these filtrations requires us to provide a non-degenerate associative bilinear form
\[b: \mathcal{U}_\chi(\mathfrak{g}) \times \mathcal{U}_\chi(\mathfrak{g}) \to k.\]
In \cite{berkson}, the case $\chi=0$ is treated. In \cite{friedlander1988modular} it is generalized to arbitrary $\chi$. We will sketch the construction of this bilinear form in the following lemma.

Let $x_i$ be a basis for $\mathfrak{g}$, so $\prod_i x_i^{e_i}$ for $e_i\in \{0,1,\ldots, p-1\}$ form a basis for $\mathcal{U}_0(\mathfrak{g})$. We define $S$ as the span of $\prod_i x_i^{e_i}$ where the $e_i$ are not all $p-1$. The bilinear form $b$ constructed in \cite{berkson} is defined as $b(u,v)=\varphi_0(uv)$, where the linear map $\varphi_0$ is given by $\varphi_0|_S = 0$ and $\varphi_0(\prod_i x_i^{p-1})=1$. The same definition works in the general case.

\begin{lemma-N}[\cite{friedlander1988modular}]
There exists a non-degenerate associative bilinear form
\[b: \mathcal{U}_\chi(\mathfrak{g}) \times \mathcal{U}_\chi(\mathfrak{g}) \to k.\]
\end{lemma-N}

\begin{proof}
We claim that there is a $Z_{\mathrm{Fr}}$-bilinear pairing
\[\mathcal{U}(\mathfrak{g}) \otimes_{Z_{\mathrm{Fr}}} \mathcal{U}(\mathfrak{g}) \to Z_{\mathrm{Fr}},\]
defined via the pairing $b$ on $\mathcal{U}_0(\mathfrak{g})$. The algebra $\mathcal{U}(\mathfrak{g})$ is known to be a free module over $Z_{\mathrm{Fr}}$ as a consequence of the PBW theorem (see \cite{jantzen1998representations}). The elements $\prod_i x_i^{e_i}$ for $e_i\in \{0,1,\ldots, p-1\}$ form a basis for $\mathcal{U}(\mathfrak{g})$ as a $Z_{\mathrm{Fr}}$-algebra.  Define a lifted map $\Phi_0$ in the same way as $\varphi_0$, as the $Z_{\mathrm{Fr}}$-linear map given by
\[\Phi_0(\prod_i x_i^{e_i}) = \begin{cases} 1 \text{ if } (e_1, \ldots, e_n)=(p-1,\ldots, p-1) \\ 0 \text{ otherwise.} \end{cases}\]
We define the pairing as $\widetilde{b}(x,y)=\Phi_0(xy)$. The same argument as in \cite{berkson} is applicable and shows non-degeneracy and associativity.

Let $k_\chi$ be the one dimensional $Z_{\mathrm{Fr}}$-algebra determined by the character (as a map $\mathfrak{g} \to k$). Noting that $\mathcal{U}_\chi(\mathfrak{g}) = \mathcal{U}(\mathfrak{g}) \otimes_{Z_{\mathrm{Fr}}} k_\chi$, upon a base change $Z_{\mathrm{Fr}} \to k_\chi$ we see $\widetilde{b}$ induces the desired pairing on any $\mathcal{U}_\chi(\mathfrak{g})$.
\end{proof}

If we let $N=\dim_k \mathfrak{g}$ and $V_i$ be as in \S \ref{sect: filtration_def} we have a filtration of length $N(p-1)+1$ on $\mathcal{U}_\chi(\mathfrak{g})$
\[ 0 \subset V_0 \subset V_1 \subset \ldots \subset V_{N(p-1)} = \mathcal{U}_\chi(\mathfrak{g}).\]
Here, $0$ denotes the trivial vector space. From the competibility of the filtration with multiplication and the definition of the bilinear form $b$ we see that \[V_i^{\perp} = V_{N(p-1)-i-1},\] where $\perp$ denotes the orthogonal complement with respect to $b$.

Additionally, suppose we have a block decomposition $\mathcal{U}_\chi(\mathfrak{g})=\bigoplus_\lambda A_\lambda$. The $A_\lambda$ are mutually orthogonal with respect to the algebra structure of $\mathcal{U}_\chi(\mathfrak{g})$, and hence we deduce that the bilinear form $b$ restricts to $b_\lambda: A_\lambda \times A_\lambda \to k$ which is still a non-degenerate associative pairing. We denote the orthogonal complement in $A_\lambda$ with respect to $b_\lambda$ by $\perp_\lambda$.

\begin{theorem-N}
Let $V=V_i$ for some $i$. Then $V^\perp$ is also a term of the PBW-filtration, and
\[\pi_\lambda(V)^{\perp_\lambda} = A_\lambda \cap V^\perp.\]
In particular, we have $\dim \pi_\lambda(V) + \dim A_\lambda \cap V^\perp = \dim A_\lambda$.
\label{duality_theorem}
\end{theorem-N}

\begin{proof}
The assertion $\pi_\lambda(V)^{\perp_\lambda} = A_\lambda \cap V^{\perp}$ is equivalent to $\pi_\lambda(V) = (A_\lambda \cap V^\perp)^{\perp_\lambda}$. We have
\[(A_\lambda \cap V^\perp)^{\perp_\lambda} = (A_\lambda \cap V^\perp)^{\perp} \cap A_\lambda = (A_\lambda^{\perp} + V) \cap A_\lambda.\]
Now we recall that the map $\pi_\lambda$ is a projection map as it is a central idempotent, and also that the kernel is precisely $A_\lambda^\perp$. This is due to the definition of $b$, since if $uv=0$ we have $b(u,v)=0$. Thus, we have $\ker \pi_\lambda = \bigoplus_{\lambda'\neq \lambda} A_{\lambda'} = A_{\lambda}^{\perp}$.

Since $\pi_\lambda|_{A_\lambda}=\id$ we have
\[\pi_\lambda((A_\lambda^{\perp} + V) \cap A_\lambda) = (A_\lambda^{\perp} + V) \cap A_\lambda.\]
Knowing the kernel of $\pi_\lambda$, the left side is $\pi_\lambda(V)$. Thus $\pi_\lambda(V) = (A_\lambda \cap V^\perp)^{\perp_\lambda}$.
\end{proof}

Due to this result, it suffices to study just one of the filtrations of Definition \ref{pbw_on_blocks_def}. In what follows we understand the pushforward filtration using its relation to the shifted filtration.

\section{The case of \texorpdfstring{$\mathfrak{g}=\sl$}{sl2}}

\noindent \textbf{\textsc{Notation.}} Throughout this section we will refer to the block by the action of the central element $c$ (defined by Lemma \ref{HCcenter}) on the simple modules in the block, which is a constant we denote by $\alpha$ (similar to the $\alpha_i$ of \S \ref{sect: shifted_PBW}). We also assume $p>2$ throughout.

\subsection{Central idempotents} We start by providing explicit formulas for the central idempotents corresponding to the block and defining the projections $\pi_\alpha: \mathcal{U}_\chi(\mathfrak{g}) \to A_\alpha$.  In order to do this, we compute the image of $Z_{\mathrm{HC}}$ in $\mathcal{U}_\chi(\sl)$. We will denote this image by $Z$.

\begin{proposition-N}
We have 
\begin{enumerate}
    \item When $\chi=0$ or $\chi=e$ the image of the center is
    \[Z \simeq k[c]/\langle c^p - 2c^{(p+1)/2}+c \rangle.\]
    \item When $\chi = a h/2$ the image of the center is
    \[Z \simeq k[c]/\langle c^p - 2c^{(p+1)/2}+c - a^2 \rangle.\]
\end{enumerate}
\label{center_relations}
\end{proposition-N}

\begin{proof}
By results of \ref{sect: center} we have 
\[Z = k_\chi \otimes_{Z_{\mathrm{Fr}}^G} Z_{\mathrm{HC}}.\]
We identify $Z_{\mathrm{HC}}$ with $k[(h-1)^2] = k[h'^2]$ via Lemma \ref{HCcenter}. As this is a PID we need only find the relation induced by the tensor product. We may identify
\[Z_{\mathrm{Fr}}^G \simeq \mathrm{S}(\mathfrak{h}^{(1)})^{W,\bigcdot} = k[h'^p-h']^{W,\bigcdot} = k[(h'^p-h')^2].\]
Knowing this, we see that once we expand we get the relation $h'^{2p}-2h'^{p+1}+h'^2=0$ in case (1) of the proposition. Using our preimage $c$ of the generator $h'^2$, once we reverse the Harish-Chandra isomorphism we get $k[c]/\langle c^p - 2c^{(p+1)/2}+c \rangle$ for the center.

In the same way in case (2) we get the relation  $(h'^p-h')^2-a^2=0$ for $h'$ since $h'^p-h'$ acts by $a$ on $k$, and hence \[Z \simeq k[c]/\langle c^p - 2c^{(p+1)/2}+c - a^2 \rangle.\]
\end{proof}

Now we are ready to compute the central idempotents $\pi_\alpha$ for the block decompositions of $\mathcal{U}_\chi(\sl)$.

\begin{theorem-N}
As in Proposition \ref{center_relations}, we have two cases for the central idempotents of the block decomposition:
\begin{enumerate}
    \item Set $\Phi(c) = c^p-2c^{(p+1)/2}+c$. When $\chi=0$ or $e$, the block decomposition becomes
    \[\mathcal{U}_\chi(\sl) = \bigoplus_{\qr{\alpha}{p}\neq -1} \pi_\alpha \mathcal{U}_\chi(\sl)\]
    where $\pi_0 = \Phi(c)/c$ and $\pi_\alpha = \frac{2(c+\alpha)\Phi(c)}{(c-\alpha)^2}$ for $\alpha \neq 0$ a quadratic residue modulo $p$.
    \item Set $\Phi_a(c) = c^p-2c^{(p+1)/2}+c - a^2$ for $\alpha \in k^\times$. We have for $\chi = a h/2$ the block decomposition
    \[\mathcal{U}_\chi(\sl) = \bigoplus_{\Phi_a(\alpha)=0} \pi_\alpha \mathcal{U}_\chi(\sl),\]
    where $\pi_\alpha = \kappa_\alpha \frac{\Phi_a(c)}{c-\alpha}$, where $\kappa_\alpha$ is a normalizing constant so that $\pi_\alpha(\alpha)=1$.
\end{enumerate}
\label{block_idempotents}
Here, we abuse notation by writing the idempotents $\pi_\alpha$ as elements of $k[c]$, and then passing to $k[c]/\langle \Phi(c)\rangle$ or $k[c]/\langle \Phi_a(c) \rangle$ to obtain the actual idempotents.
\end{theorem-N}

We will break the proof of this theorem into several parts.

\begin{lemma-N}
Dividing by cases in Theorem \ref{block_idempotents}, we have:
\begin{enumerate}
    \item When $\chi=0$ or $e$, if $\alpha'\in \F_p$ is a square and we view $\pi_\alpha$ as an element of $k[c]$ then $\pi_\alpha(\alpha') = 0$ unless $\alpha'=\alpha$, in which case we get $1$.
    \item When $\chi = ah/2$ for $a \in k^\times$ and $\alpha'$ is a root of $\Phi_a$, we have $\pi_\alpha(\alpha')=0$ unless $\alpha'=\alpha$, in which case we get $1$.
\end{enumerate}
\end{lemma-N}

\begin{proof}
We begin with case (1). A helpful observation in this case is that $c^p-2c^{(p+1)/2}+c$ already factors completely over $\F_p \subset k$ as $c\prod_{\qr{\alpha}{p}=1}(c-\alpha)^2$. Then we have
\[\pi_0 = \prod_{\qr{\alpha}{p}=1}(c-\alpha)^2,\]
and so for any nonzero square we get $\pi_0(\alpha')=0$. At $0$ we get a product of squares of quadratic residues. This is $1$, because $\prod_{k=1}^{\frac{p-1}{2}} k^2 \equiv (-1)^{\frac{p-1}{2}} (p-1)! \equiv (-1)^{\frac{p-1}{2}} \pmod{p}$ by Wilson's theorem. For $\alpha \neq 0$, we can use the factorization to obtain
\[\pi_\alpha = 2c (c+\alpha) \prod_{\alpha' \neq \alpha, \qr{\alpha'}{p}=1} (c-\alpha')^2.\]
Obviously, this implies the value is $0$ at any square not equal to $\alpha$. Note that as a polynomial in $k[c]$ we have
\[\pi_\alpha(\alpha) =  4 \alpha^2 \prod_{\alpha \neq \alpha', \qr{\alpha'}{p}=1} (\alpha-\alpha')^2 = 4\alpha^2 \alpha^{p-3} \prod_{\alpha' \neq 1, \qr{\alpha'}{p}=1} (1-\alpha')^2 \equiv 4 \prod_{\alpha' \neq 1, \qr{\alpha'}{p}=1} (1-\alpha')^2.\]
This is equal to $4\frac{\Phi(c)}{c(c-1)^2}\big|_1$. This can be computed explicitly over $\Z$ and the value at $1$ is $4\left(\frac{p-1}{2}\right)^2$, and so reducing to $\F_p[c] \subset k[c]$ we see that the value is equal to $1$.

In case (2), by definition $\Phi_a(c) = \prod_{\Phi_a(\alpha)} (c-\alpha)$ and all roots have multiplicity $1$. The statement now immediately follows from the definition of $\kappa_\alpha$ and $\pi_\alpha$.
\end{proof}

\begin{lemma-N}
The elements $\pi_\alpha$ in all cases are central idempotents, and $\sum_\alpha \pi_\alpha = 1$.
\end{lemma-N}

\begin{proof} By construction $\pi_\alpha$ are central. Consider first the case $\chi=0$ or $e$.

Let us check $\pi_\alpha^2=\pi_\alpha$. For $\alpha = 0$, we observe that
\[\pi_0^2 = (\Phi(c)-\alpha \pi_0) \frac{1}{c-\alpha}\prod_{\qr{\alpha}{p}=1}(c-\alpha)^2.\]
Reducing modulo $\Phi(c)$ and repeatedly applying this relation we get $\left(\prod_{\qr{\alpha}{p}=1} \alpha^2\right) \pi_0 = \pi_0 \pmod{\Phi(c)}$. Thus, $\pi_0^2=\pi_0$. Using a similar reduction method, we can show $\pi_\alpha$ is idempotent.

Now we show $\sum_\alpha \pi_\alpha = 1$. We claim that this relation holds already in $k[c]$. By the previous lemma, as a polynomial in $k[c]$ the element $\sum_\alpha \pi_\alpha$ on each quadratic residue modulo $p$ has value $1$. For nonresidues $\eta, \eta' \in \F_p$ and any $\alpha$ we have 
\[\pi_\alpha(\eta) = \pi_{(\eta'/\eta)\alpha}(\eta').\] 
To see this, note that in $\F_p$ for a nonresidue $\eta$ we have $\prod_{\qr{\alpha}{p}=1} (\eta - \alpha)^2 = \prod_{\qr{\eta'}{p}=-1}(1-\eta')^2 = 4$. This gives the claim for $\alpha = 0$. The rest comes from the relation $2\eta' (\eta'+\alpha\frac{\eta'}{\eta}) = (\eta/\eta')^2 2\eta(\eta+\alpha)$, since upon pulling this factor into the product $\prod_{\alpha' \neq (\eta'/\eta)\alpha} (\eta' - \alpha')^2$ in the factorization of $\pi_{(\eta'/\eta)\alpha}(\eta')$ we get $\pi_\alpha(\eta)$. We conclude that \[\sum_\alpha \pi_\alpha(\eta)=\sum_\alpha \pi_{(\eta'/\eta)\alpha}(\eta')=\sum_\alpha \pi_\alpha(\eta').\]
Hence, we may deduce that $\sum_\alpha \pi_\alpha \in k[c]$ is a polynomial of degree $<p-1$ which is constant on the set of quadratic residues and on the set of nonresidues modulo $p$. The bound on the degree holds because we know that the leading coefficients of the $\pi_\alpha$ sum to $0$ in $\F_p \subset k$.

Due to its degree, the residue modulo $(c^{(p-1)/2}-1)(c^{(p-1)/2}+1)$ is uniquely determined. By the Chinese remainder theorem, its residues modulo $(c^{(p-1)/2}\pm 1)$ uniquely determine the result. That is, we have
\[\sum_\alpha \pi_\alpha = x(c^{(p-1)/2}-1)+y(c^{(p-1)/2}+1).\]
Now evaluating on a quadratic residue gives $2y = 1$, so $y \equiv \frac{1}{2} \pmod{p}$. We can evaluate at $c=0$ to get the constant term as $1$, so we force $x=-y$. This implies $\sum_\alpha \pi_\alpha=1$ as desired.

Since in case (2) the polynomial $\Phi_a(c)$ has simple roots the claim follows from the previous lemma and the Chinese remainder theorem.
\end{proof}

We may now prove the theorem.
\begin{proof}[Proof of Theorem \ref{block_idempotents}]
The previous lemma shows that in any case the $\pi_\alpha$ are central idempotents. It is easy to check that they are mutually orthogonal (via Proposition \ref{center_relations}), and since they sum to $1$ they define a direct sum decomposition 
\[\mathcal{U}_\chi(\sl) = \bigoplus_\alpha \pi_\alpha \mathcal{U}_\chi(\sl).\]

What remains to check is that this is indeed the block decomposition. For a block corresponding to $\alpha$, by definition the action of $c$ on the simples in the block is equal to $\alpha$. It follows that for a constant $\beta\neq\alpha$ the action of $c-\beta$ is invertible on the block, by the definition of the Serre subcategory. We claim that $(c-\alpha)^{d_\alpha}$ acts by $0$ on this block, where $d_\alpha$ is the degree of the factor $c-\alpha$ in $\Phi(c)$. Indeed, the action of $\Phi(c)$ on the object in the block is equal up to the invertible element to the multiplication by $(c-\alpha)^{d_\alpha}$ but on the other hand it is $0$ by Proposition \ref{center_relations}. Alternatively, this may be seen from the construction of Theorem \ref{liealgcharblocks}, because the action of $(c-\alpha)^{d_\alpha}$ is $0$ on the indecomposable projective covers for the block (see Lemma \ref{projective_covers}).

It follows that $\pi_\alpha$ vanishes on the blocks other then the one corresponding to $\alpha$. We conclude that $\pi_\alpha\mathcal{U}_\chi(\sl)\subset A_\alpha$. The statement follows by a dimension argument.
\end{proof}

An immediate corollary of Theorem \ref{block_idempotents} is an understanding of the algebra of coinvariants for every block, as we now know the maps $\pi_\alpha$ and the image of the Harish-Chandra center explicitly.

\begin{corollary-N}
We have two results corresponding to the cases of the previous theorem.
\begin{enumerate}
    \item For $\chi = 0$ or $e$, the algebra of coinvariants for the block $A_\alpha$ (indexed by squares in $\F_p$) is $\frac{k[c]}{(c-\alpha)^2}$ for $\qr{\alpha}{p}=1$ and $\frac{k[c]}{c}$ for $\alpha=0$.
    \item For $\chi = ah/2$ regular, the algebra of coinvariants for $A_\alpha$ (indexed by roots of $\Phi_a$) is $\frac{k[c]}{c-\alpha}$.
\end{enumerate}
\label{coinvariant_algebras}
\end{corollary-N}
\begin{proof}
The statement follows immediately from the fact that $C_\alpha\simeq \pi_\alpha Z$.
\end{proof}

This is consistent with Lemma \ref{harishchandrakernel}, since these are all local rings and graded (all characters of $\sl$ are either regular or nilpotent). We also conclude that for $\chi$ regular $A_\alpha = \ker (c-\alpha)$, and for $\chi=0$ or $e$ it is $A_0 = \ker c$ and $A_\alpha = \ker (c-\alpha)^2$ for $\alpha\ne 0$.

\subsection{Determining the associated graded algebras} 

Throughout this and next sections we will assume $\chi=0$ unless the opposite is specified explicitly.

Recall that under this assumption Theorem \ref{subschemestruct} provide an $\SL_2$-equivariant map
\begin{display}
k[\mathcal{N}_p] \otimes_k C_\alpha  \ar{r} & \gr_{\mathrm{sh}} A_\alpha.
\end{display}

We determined $C_\alpha$ in the previous section. Let us now describe $k[\mathcal{N}_p]$ as a representation of $\SL_2$ and a graded algebra.

Consider a matrix $A=\begin{pmatrix} a & b \\ c & -a \end{pmatrix}$ in $\sl$. It lies in the nilpotent cone if and only if $A^2=0$, which happens if and only if $a^2+bc=0$. Thus, we have
\[\mathcal{N} = \Spec k[a,b,c]/\langle a^2+bc\rangle.\]
Recall that the Frobenius neighborhood of the nilpotent cone, $\mathcal{N}_p$, is defined as the preimage of $0$ under the map $\mathrm{Fr}: \mathcal{N} \to \mathcal{N}^{(1)}$. Thus, for $\sl$ we can write
\[k[\mathcal{N}_p] = k[a,b,c]/\langle a^p,b^p,c^p, a^2+bc \rangle.\]
There is an $\SL_2$-action on $\mathcal{N}$ via the adjoint action on $\mathfrak{g}$. This induces an action on $\mathcal{N}_p$, and the structure sheaf becomes an equivariant sheaf for this action so that $k[\mathcal{N}_p]$ itself is an $\SL_2$-module. The action is compatible with the degree, so we may decompose the degree $d$ component of $k[\mathcal{N}_p]$ as an $\SL_2$-module. 

\begin{proposition-N}
We have
\[k[\mathcal{N}_p]_d = \nabla_{2d}\]
for $d<p$, and for $p$ through $3(p-1)/2$ (inclusive) it is $L_{4p-2d-2}$ or $0$ after this point. That is, the dimensions of graded components in the associated graded algebra are
\[1, 3, \ldots, 2p-1, 2p-2, 2p-6, \ldots, 4\]
so that $\dim k[\mathcal{N}_p] = p^2 + \frac{p^2-1}{2}$.
\label{nilcone_sl2_decomp}
\end{proposition-N}

\begin{proof}
The ideal $\langle a^p, b^p, c^p \rangle$ is concentrated in degree $\ge p$. Hence, for $d<p$ the graded components of $k[\mathcal{N}_p]$ are the same as in $k[\mathcal{N}]$. We claim these are $\nabla_{2d}$. 

Consider the isomorphism
\begin{display}
\Spec k[x,y]/\{\pm 1\} \ar{r}{\simeq} & \mathcal{N}
\end{display}
where $\pm 1$ act by scalars, given by $\pm (x,y)\mapsto\begin{pmatrix} xy & -y^2 \\ x^2 & -xy \end{pmatrix}$. It is compatible with the $\SL_2$-action and preserves grading up to rescaling by $2$. The $\pm$-invariants on $k[x,y]$ are exactly even graded components, which are $\nabla_{2d}$ as $\SL_2$-representations.

In degree $p$ the elements $a^p, b^p, c^p$ generating the kernel ideal of the map $k[\mathcal{N}]\to k[\mathcal{N}_p]$ form a $G$-submodule $L_{2p}\simeq \mathrm{Fr}^* L_2$. For $p\le d \le 3(p-1)/2$, we have an exact sequence
\begin{display}
0 \ar{r} & L_{2d} \ar{r} & \nabla_{2d} \ar{r} & L_{4p-2d-2} \ar{r} & 0 \\
 &  \mathrm{Fr}^* L_2 \otimes L_{2d-2p} \ar{u}{\simeq} & & 
\end{display}
The isomorphism $L_{2d} \simeq \mathrm{Fr}^* L_2 \otimes L_{2d-2p}$ follows via Lemma \ref{frob_simple_twist} and this simple submodule is precisely the image of multiplication of the elements in $k[\mathcal{N}]_{d-p}$ by the generators of the ideal. Hence we conclude
\[k[\mathcal{N}_p]_d = L_{4p-2d-2}\]
for $d$ in this range. At $d=\frac{3(p-1)}{2}+1$, we have an isomorphism
\begin{display}
\mathrm{Fr}^* L_2 \otimes \nabla_{p-1} \ar{r}{\sim} & \nabla_{3p-1}
\end{display}
since the dimension of the left side is $(\dim \mathrm{Fr}^* L_2)(\dim \nabla_{p-1}) = 3p = \dim \nabla_{3p-1}$ and the map is an inclusion. Hence, the graded components are $0$ from this point onward.
\end{proof}
\begin{remark}
In characteristic $0$ the description of the $G$-representation structure on $k[\mathcal{N}]$ for general $G$ is provided in \cite{kostant1963lie}. In particular, for $G=\SL_2$ the description is identical to ours.
\end{remark}

Consider a block subalgebra $A_\alpha$ of $\mathcal{U}_0(\sl)$. We put simply $\langle c-\alpha \rangle$ for the ideal $(c-\alpha) A_\alpha$ To understand the map
\begin{display}
k[\mathcal{N}_p] \otimes_k C_\lambda  \ar{r} & \gr_{\mathrm{sh}} A_\alpha.
\end{display}
we will use a map $k[\mathcal{N}_p] \to \gr_{\mathrm{sh}} A_\alpha/\langle c-\alpha \rangle$ and the multiplication by $(c-\alpha)$ map
\begin{display}
\gr_{\mathrm{sh}} A_\alpha/\langle c-\alpha \rangle  \ar{r}{\cdot (c-\alpha)} & \gr_{\mathrm{sh}} \langle c-\alpha \rangle,
\end{display}
which is well defined since $(c-\alpha)^2=0$ and shifts degree by $1$. We note here that the grading on the ideal $\langle c-\alpha \rangle$ is given by intersecting the ideal with the filtration on $A_\alpha$.

\begin{lemma-N}
We have a surjective map $\SL_2$-equivariant map
\begin{display}
k[\mathcal{N}_p] \ar[two heads]{r} & \gr_{\mathrm{sh}} A_\alpha/\langle c-\alpha \rangle.
\end{display}
When $\chi\neq 0$, this is still well-defined and surjective but need not be $\SL_2$-equivariant.
\label{surj_quotient}
\end{lemma-N}

\begin{proof}
This is a direct consequence of Theorem \ref{subschemestruct} and its proof. 
\end{proof}

In terms of the generators $a,b,c$ the above map is given explicitly by 
\[a \mapsto \overline{\pi_\alpha(h)}, b \mapsto \overline{\pi_\alpha(2e)}, c \mapsto \overline{\pi_\alpha(2f)},\] where we put the overline to denote the image of the element in $A_\alpha/\langle c-\alpha \rangle$ and then passing to the associated graded algebra. Note that the images of $a,b,c$ generate $\gr_{\mathrm{sh}} A_\alpha/\langle c-\alpha \rangle$, so this map is surjective.

In the case of $\alpha=0$ for the trivial character, we know $\pi_\alpha(c-\alpha) = 0$ by Corollary \ref{coinvariant_algebras}. Thus, $\langle c-\alpha \rangle=0$. The following theorem shows in the rest of the cases that this and the multiplication by $c-\alpha$ maps together suffice to understand the map of Theorem \ref{subschemestruct}.

\begin{theorem-N}
Fix a block $A_\alpha$ for $\chi=0, e,$ or $ah/2$ as in Lemma \ref{charactercases}. with algebra of coinvariants $k[c]/\langle (c-\alpha)^2 \rangle$. Consider the surjective map
\begin{display}
k[\mathcal{N}_p] \otimes_k \frac{k[c]}{(c-\alpha)^2}  \ar[two heads]{r} & \gr_{\mathrm{sh}} A_\alpha
\end{display}
of Theorem \ref{subschemestruct}. A pure tensor $x\otimes 1$ for $x$ homogeneous is in the kernel when $x$ is in the kernel of $k[\mathcal{N}_p] \to \gr_{\mathrm{sh}} A_\alpha/\langle c-\alpha \rangle$. A pure tensor $x \otimes (c-\alpha)$ for $x$ homogeneous is in the kernel if $x$ is in the kernel of the composite map
\begin{display}
k[\mathcal{N}_p] \ar{r} & \gr_{\mathrm{sh}} A_\alpha/\langle c-\alpha \rangle \ar{r}{\cdot (c-\alpha)} & \gr_{\mathrm{sh}} \langle c-\alpha \rangle.
\end{display}
This describes the kernel restricted to $k[\mathcal{N}_p]\otimes 1$ and  $k[\mathcal{N}_p] \otimes (c-\alpha)$. The kernel is a direct sum of these kernels as vector spaces.
\label{sl2_subschemestruct}
\end{theorem-N}

\begin{proof}
From the construction of the map and the proof of Theorem \ref{subschemestruct} it follows that the image of $k[\mathcal{N}_p]$ does not intersect the image of the ideal $\langle c-\alpha \rangle$ in the associated graded. This implies the assertion about the elements $x\otimes1$.

By construction of the map, the element $(c-\alpha)$ in the coinvariants algebra $C_\alpha$ maps to the image of $\pi_\alpha(c-\alpha)$ in the associated graded. Then $x\otimes (c-\alpha)$ maps to the image of $x$ under
\begin{display}
k[\mathcal{N}_p] \ar{r} & \gr_{\mathrm{sh}} A_\alpha/\langle c-\alpha \rangle \ar{r}{\cdot (c-\alpha)} & \gr_{\mathrm{sh}} \langle c-\alpha \rangle.
\end{display}
and assertion about such elements follows.

The restrictions of the kernel to $k[\mathcal{N}_p]\otimes 1$ and  $k[\mathcal{N}_p] \otimes (c-\alpha)$ do not intersect and generate the kernel. This yields the splitting.
\end{proof}

We apply Theorem \ref{sl2_subschemestruct} in the case of $\chi=0$ and $\alpha \neq 0$, because here we can exploit the $\SL_2$-invariance of the kernel to compute it. With this theorem, we have reduced the problem to understanding the multiplication by $(c-\alpha)$ maps and the map $k[\mathcal{N}_p] \to \gr_{\mathrm{sh}} A_\alpha/\langle c-\alpha \rangle$. We will understand $\gr_{\mathrm{sh}} \langle c-\alpha \rangle$ first.

\begin{lemma-N}
Let $\alpha \neq 0$. In $A_\alpha$, we have
\[e^{p-\omega}(c-\alpha) = 0\]
where we pick $\omega$ such that $\omega^2 = \alpha$ and $1 \le \omega \le \frac{p-1}{2}$.
\label{nontrivial_kernel}
\end{lemma-N}

\begin{proof}
We may decompose the block subalgebra $A_\alpha$ into indecomposable projectives over itself as $\bigoplus_i \mathcal{P}_i$, since it is Artinian (it has finite dimension over $k$). Each of these will be a projective cover of a simple module. On each $\mathcal{P}_i$, the image of multiplication by $(c-\alpha)$ will be an irreducible module so that $(c-\alpha)\mathcal{P}_i = L$ or $0$, where $L$ is a simple module for $A_\alpha$. This follows from Lemma \ref{projective_covers} and that $c-\alpha$ annihilates the baby Verma modules in the corresponding block.

There are two isomorphism classes of projective covers for simple modules of $A_\alpha$, which are related to each other by the action of the Weyl group $W=\Z/2\Z$ (\cite{cline1985injective}, appendix). This action preserves $c-\alpha$, so if $c-\alpha=0$ on one isomorphism class it is $0$ on the other. As it does not act by $0$ on the block and has a nontrivial kernel, we conclude $(c-\alpha)\mathcal{P}_i=L$. We know there are two such simple modules up to isomorphism via the block decomposition, namely $L_{0,p-\omega-1}$ and $L_{0,\omega-1}$. Then in either case $e^{p-\omega} L = 0$, from which the identity follows.
\end{proof}

\begin{lemma-N}
Resume the notation and assumptions of the previous lemma. We have $\dim \langle c-\alpha\rangle = \omega^2+(p-\omega)^2$ and an isomorphism
\begin{display}
A_\alpha/\Ann(c-\alpha) \ar{r}{\sim} & \End_k(L_{0,\omega-1}) \oplus \End_k(L_{0,p-\omega-1}).
\end{display}
\label{quotient_dimensions}
\end{lemma-N}

\begin{proof}
We start by constructing a map
\begin{display}
A_\alpha/\Ann(c-\alpha) \ar{r} & \End_k(L_{0,\omega-1}) \oplus \End_k(L_{0,p-\omega-1}).
\end{display}
We may again make a direct sum decomposition into indecomposable projectives $A_\alpha = \bigoplus_i \mathcal{P}_i$. This shows $(c-\alpha)A_\alpha = \bigoplus_i L_i$ where $L_i$ are simple $\sl$-modules which are either $L_{0,\omega-1}$ or $L_{0,p-\omega-1}$. The multiplicity of each equals their dimension by the results stated in \cite{friedlander1988modular}, and hence $\dim \langle c-\alpha\rangle = \omega^2+(p-\omega)^2$.

These are both simple modules for $A_\alpha/\Ann(c-\alpha) \simeq \langle c-\alpha \rangle$ due to being simple $\sl$-modules - we may explicitly define the action of an element of $A_\alpha/\Ann(c-\alpha)$ through the corresponding action under the multiplication by $(c-\alpha)$ map on $(c-\alpha)\mathcal{P}_i$. These are all the simple modules for $A_\alpha/\Ann(c-\alpha)$ because they are the only simple modules for $A_\alpha$.

Now we show the map $A_\alpha/\Ann(c-\alpha) \to \End(L_{0,\omega-1}) \oplus \End(L_{0,p-\omega-1})$ is an isomorphism. The simple $A_\alpha/\Ann(c-\alpha)$-modules  $L_{0,\omega-1}, L_{0,p-\omega-1}$ are projective because they are direct summands of $A_\alpha/\Ann(c-\alpha)$. Then all of the simple modules are projective, which implies that $A_\alpha/\Ann(c-\alpha)$ is a semisimple algebra. The claim follows from the Artin-Wedderburn theorem.
\end{proof}

\begin{remark}
When $\chi=e$, we have $A_\alpha \simeq \Mat_p(C_\alpha)$ where $C_\alpha = k[c]/(c-\alpha)^2$ is the algebra of coinvariants from Corollary \ref{coinvariant_algebras} by \cite{brownblocks} \S 3.19. Then $\dim_k A_\alpha = 2p^2$, and $\dim_k \langle c-\alpha \rangle = p^2$. We may also use the same argument as above to show $A_\alpha/\Ann(c-\alpha) \simeq \End_k(\Delta_{e,\alpha})$ - the only difference is that $\Delta_{e,\alpha}$ is the only simple that appears.
\end{remark}

These observations will allow us to obtain the following result.

\begin{proposition-N}
For the trivial character, via the map of Lemma \ref{surj_quotient} we have an isomorphism
\[\gr_{\mathrm{sh}} A_\alpha/\langle c-\alpha \rangle \simeq k[\mathcal{N}_p]_{< p+\omega}.\]
Given $\alpha$ we choose $\omega$ so $\alpha = \omega^2$ for $0\le \omega \le \frac{p-1}{2}$.
\label{restricted_degree}
\end{proposition-N}

\begin{proof}
In degrees $\ge p$ we identify all graded components with simple $\SL_2$-modules in $k[\mathcal{N}_p]$ in Proposition \ref{nilcone_sl2_decomp}. Because the kernel can be checked to be $\SL_2$-invariant by Lemma \ref{surj_quotient}, if there is a nontrivial kernel in some degree $d\ge p$ then the kernel includes all degrees $\ge d$.

Now we show the map of Lemma \ref{surj_quotient} is an isomorphism in degrees below $p$. If there was a kernel in degree below $p$, this would yield a nontrivial kernel in degree $p-1$ so that we get the simple quotient $L_0$ of $\nabla_{2p-2}$ in the image. 

Note that $\gr_{\mathrm{sh}} A_\alpha/\langle c-\alpha \rangle$ is generated by its first graded component $L_2$ and, thus the multiplication map from $L_2\otimes L_0$ is surjective on the $p$-th graded component. On the other hand the $p$-th graded component is a quotient of $k[\mathcal{N}_p]_p=L_{2p-2}$, which is an irreducible $\SL_2$-representation and consequently vanishes. 

We deduce that a nontrivial kernel in degree $p-1$ yields $\gr_{\mathrm{sh}} A_\alpha/\langle c-\alpha \rangle$ vanishing in degrees $\ge p$. By dimension count we see that   $\dim A_\alpha/\langle c-\alpha \rangle <p^2$. This contradicts our earlier dimension findings in Lemma \ref{quotient_dimensions}, so in degrees below $p$ the map of Lemma \ref{surj_quotient} is an isomorphism. Thus, we deduce that $\mathrm{gr} A_\alpha/\langle c-\alpha \rangle$ is $k[\mathcal{N}_p]_{< d_{\alpha}}$ for some $d_\alpha\ge p$. Determining this $d_\alpha$ amounts to computing the dimensions of both objects as vector spaces over $k$.  

We first consider the case of $\alpha\neq 0$. The block $A_\alpha$ has dimension $2p^2$. Using identification $A_\alpha/\Ann(c-\alpha) \simeq (c-\alpha)A_\alpha$ and Lemma \ref{quotient_dimensions}, we conclude that the dimension of $\gr_{\mathrm{sh}} A_\alpha/\langle c-\alpha \rangle$ is
\begin{align*}
2p^2 - \dim_k \End_k(L_{0,\omega-1})\oplus \End_k(L_{0,p-\omega-1}) &= 2p^2 - \left(\omega\right)^2 - \left(p-\omega\right)^2 \\
&= (p+\omega)^2-3(\omega)^2.
\end{align*}
This expands as $p^2+2\omega(p-\omega)$. At $\omega = \frac{p-1}{2}$ this is maximized, and matches the dimension of $k[\mathcal{N}_p]$. Picking $1\le \omega \le \frac{p-1}{2}$ to correspond to each block, we see that having $\omega$ for our block $A_\alpha$ means $d_\alpha = p-1+\omega$.

In the case of $\alpha=0$, the block $A_\alpha$ corresponds to $\Rep(\sl)_{0,0}$ generated by a single simple projective module $L$ and $A_0 \simeq \End_k(L)$ has dimension $p^2$. The result follows from Proposition \ref{nilcone_sl2_decomp} by summing the dimensions up to degree $p-1$ to get $p^2$. This yields the claimed decomposition for $\omega=0$.
\end{proof}

\begin{remark}
For $\chi=ah/2$ and $a\in k^\times$, a block in this case is isomorphic to $\End_k(\Delta_{ah/2,\lambda})$ so it has dimension $p^2$. Additionally, by Corollary \ref{coinvariant_algebras} we see that $c-\alpha=0$ in $A_\alpha$, so this tells us the algebra structure of a block as a quotient of $k[\mathcal{N}_p]$ if we understand the kernel of Lemma \ref{surj_quotient}. In this case, one may explicitly argue there is no kernel in degrees below $p$.
It is sufficient to check this for the map $\mathcal{U}_\chi(\mathfrak{g})/\langle c-\alpha\rangle\to A_\alpha$ induced by $\pi_\alpha$. From the block decomposition we know precisely which elements are annihilated by $\pi_\alpha$, so we can conclude the injectivity in degrees below $p$. We then know that $k[\mathcal{N}_p]/\langle k[\mathcal{N}_p]_p \rangle$ injects into $\gr_{\mathrm{sh}} A_\alpha = \gr_{\mathrm{pf}} A_\alpha$, and as it also has dimension $p^2$ this is an isomorphism of algebras.
\end{remark}

Next, we use Lemma \ref{nontrivial_kernel} to deduce the structure of $\langle c-\alpha\rangle$ for $\chi=0$. When $\alpha=0$, we note this is trivial by Corollary \ref{coinvariant_algebras}.

\begin{proposition-N}
Let $\alpha \neq 0$. We have
\[(\gr_{\mathrm{sh}} \langle c-\alpha\rangle)_d = \begin{cases} \nabla_{2(d-1)} \text{ for } 1\le d \le p-\omega \\ L_{2p-2d} \text{ for } p-\omega < d \le p \\
0 \text{ otherwise} \end{cases}.\]
\label{ideal_decomposition}
\end{proposition-N}

\begin{proof}
We have a surjective map
\begin{display}
\gr_{\mathrm{sh}} A_\alpha/\langle c-\alpha \rangle  \ar{r}{\cdot (c-\alpha)} & \gr_{\mathrm{sh}} \langle c-\alpha \rangle.
\end{display}
This is because the map of algebras $A_\alpha/\langle c-\alpha \rangle \to \langle c-\alpha \rangle$ is surjective and respects the filtrations. Thus, we indeed have a surjective map on the associated graded algebras. It is also a map of graded $\SL_2$-representations, since $(c-\alpha)$ is invariant under the adjoint action of $\SL_2$.

In terms of degrees, since we are in the context of the shifted PBW filtration this sends
\begin{display}
(\gr_{\mathrm{sh}} A_\alpha/\langle c-\alpha \rangle)_d \ar{r} & \mathrm{gr}((c-\alpha)A_\alpha)_{d+1}.
\end{display}
We have a map $k[\mathcal{N}]_{d-1} = \nabla_{2(d-1)} \to \gr_{\mathrm{sh}} ((c-\alpha)A_\alpha)_{d}$ for $1\le d \le p$ as $\nabla_{2(d-1)} = k[\mathcal{N}_p]_{d-1}$ for $d$ in this range. As a result, the image in the degree $d+1$ components for $d$ in this range may be written as quotients of $\nabla_{2(d-1)}$. Using $\SL_2$-invariance, they are then either $\nabla_{2(d-1)}, L_{2p-2(d-1)-2} = L_{2p-2d}$, or $0$.

We showed in Lemma \ref{nontrivial_kernel} that we have $e^{p-\omega}(c-\alpha) = 0$ in $A_\alpha$. We deduce that starting at the degree $p-\omega+1$ the kernel of
\begin{display}
(\gr_{\mathrm{sh}} A_\alpha/\langle c-\alpha \rangle)_d \ar{r} & \gr_{\mathrm{sh}} ((c-\alpha)A_\alpha)_{d+1}
\end{display}
is nontrivial. By the same argument as in the first part of the proof of Proposition \ref{restricted_degree} as the nontrivial kernel in degrees below $p$ yields vanishing in degrees starting from $p$.  

As a result we have a surjection
\begin{display} 
\bigoplus_{0<d\le p-\omega} \nabla_{2(d-1)} \oplus \bigoplus_{p-\omega < d \le p} L_{2p-2d} \ar{r} &  \gr_{\mathrm{sh}} \langle c-\alpha\rangle
\end{display}
of $\SL_2$-modules. By summing up the dimensions of the summands the dimension of the left handside equals to $\omega^2+(p-\omega)^2$. From the Weyl modules, we get $(p-\omega)^2$ and from the simple modules, we get $\omega^2$: starting at $d=p$, we get the odd numbers starting at $1=\dim_k L_{2p-2p}$ for the dimensions. The above lemma shows this is exactly the dimension of $\langle c-\alpha\rangle \simeq A_\alpha/\Ann(c-\alpha)$, so we conclude this is an isomorphism of $\SL_2$-modules and the statement follows.

\end{proof}

These results additionally allow us to write down the $\SL_2$-module structure of $\gr_{\mathrm{sh}} A_\alpha$ when $\chi=0$. Combined with Theorem \ref{sl2_subschemestruct} we may also describe the algebra structure.

\begin{corollary-N}
As an $\SL_2$-module, $\gr_{\mathrm{sh}} A_\alpha$ is a direct sum of $\SL_2$-modules for each graded component where
\[(\gr_{\mathrm{sh}} A_\alpha)_d \simeq (\gr_{\mathrm{sh}} \langle c-\alpha\rangle)_d \oplus (\gr_{\mathrm{sh}} A_\alpha / \langle c-\alpha \rangle)_d.\]
This also determines the dimensions of each graded component. For the algebra structure, we have two cases:
\begin{enumerate}
    \item Let $\alpha=0$. Then $\gr_{\mathrm{sh}} A_\alpha = A_\alpha/\langle c-\alpha \rangle\simeq k[\mathcal{N}_p]_{<p}$.
    \item Take $\alpha\neq 0$. The kernel ideal of the map of  Theorem \ref{sl2_subschemestruct} is generated by elements in degree $p+\omega$ and $p-\omega+1$ respectively corresponding to two options of Theorem \ref{sl2_subschemestruct}. 
\end{enumerate}
\label{sh_SL2_decomp}
\end{corollary-N}

\begin{proof}
We know that we have an exact sequence of graded vector spaces
\begin{display}
0 \ar{r} & \gr_{\mathrm{sh}} \langle c-\alpha\rangle  \ar{r} & \gr_{\mathrm{sh}} A_\alpha \ar{r} & \gr_{\mathrm{sh}} A_\alpha / \langle c-\alpha \rangle \ar{r} & 0.
\end{display}
We claim that it splits as the sequence of graded $\SL_2$-representations. From Proposition \ref{alg_group_blocks} we know that representations from different blocks do not have nontrivial extensions between them. We observe that on each graded component modules in $\gr_{\mathrm{sh}} \langle c-\alpha\rangle$ and $\gr_{\mathrm{sh}} A_\alpha / \langle c-\alpha \rangle$ lie in the different blocks with the exception of the degree $p$ component because of the shift in indexes induced by surjective map $\cdot(c-\alpha)$ between them. In degree $p$, we have $\Ext^1(L_0,L_{2p-2})=0$ (by \cite{cline1979ext1}) so we must have a direct sum in this case.

Now let us consider the claims about the algebra structure. In case (1) we have $\langle c-\alpha=0\rangle$ by Corollary \ref{coinvariant_algebras} and the claim follows from Proposition \ref{restricted_degree}. Now we consider the claims about generators in case (2). The map $k[\mathcal{N}_p] \to \gr_{\mathrm{sh}} \langle c-\alpha\rangle$ is the composite map
\begin{display}
k[\mathcal{N}_p] \ar{r} & \gr_{\mathrm{sh}} A_\alpha/\langle c-\alpha \rangle \ar{r}{\cdot (c-\alpha)} & \gr_{\mathrm{sh}} \langle c-\alpha\rangle.
\end{display}
The first map has kernel generated in degree $p+\omega$ by Proposition \ref{restricted_degree} and the second map has kernel generated in degree $p-\omega+1$ by Proposition \ref{ideal_decomposition}. The rest follows from Proposition \ref{restricted_degree}.
\end{proof}

\begin{remark}
We observe that as a subrepresentation of $\gr_{\mathrm{sh}} A_\alpha$ for $\alpha\neq 0$ the trivial module $L_0$ occurs with the multiplicity $3$. This provide an upper bound on the dimension of the center $Z(A_\alpha)$, which is known to be exact in this case. 

Two of these dimension comes from $C_\alpha$. The third copy of $L_0$ appears as the image of $\pi_\alpha(c-\alpha)$ in degree $p+1$.
\end{remark}

Through the methods similar to the above one may also describe the pushforward filtration.

\begin{proposition-N} We have an exact sequence
\begin{display}
0 \ar{r} & \gr_{\mathrm{pf}} \langle c-\alpha\rangle \ar{r} & \gr_{\mathrm{pf}} A_\alpha \ar{r} & \gr_{\mathrm{pf}} A_\alpha / \langle c-\alpha \rangle \ar{r} & 0, 
\end{display}
where $\gr_{\mathrm{pf}} A_\alpha / \langle c-\alpha \rangle = \gr_{\mathrm{sh}} A_\alpha / \langle c-\alpha \rangle$ and $(\gr_{\mathrm{pf}} \langle c-\alpha\rangle)_{d+1} = (\gr_{\mathrm{sh}} \langle c-\alpha\rangle)_d$. As with the shifted PBW filtration, as an exact sequence of $\SL_2$-modules this splits in each degree.
\label{pf_SL2_decomp}
\end{proposition-N}

\begin{proof}
The existence of such an exact sequence is clear, since we induce this by passing to the associated graded algebras as the maps in the corresponding exact sequence of algebras respect the filtration $V_i^{\mathrm{pf}}$.

The first claim $\gr_{\mathrm{pf}} A_\alpha / \langle c-\alpha \rangle = \gr_{\mathrm{sh}} A_\alpha / \langle c-\alpha \rangle$ holds because these two filtrations agree on the quotient. The second claim about $\gr_{\mathrm{pf}} (c-\alpha)A_\alpha$ follows the exact same argument, except the multiplication by $(c-\alpha)$ map shifts the degree by $2$ instead of $1$ and as a result just shifts components in the associated graded algebra.

By the same method as with $\gr_{\mathrm{sh}} A_\alpha$, the block decomposition tells us that there are no nontrivial extensions of $\SL_2$-modules appearing in the exact sequence of Proposition \ref{pf_SL2_decomp} so we understand the structure as an $\SL_2$-module.
\end{proof}

\begin{remark}
By virtue of Theorem \ref{duality_theorem}, we also understand the dimensions of graded components of $\gr_{\mathrm{int}} A_\alpha$.
\end{remark}

\begin{example}
Let $\mathfrak{g} = \sl$, and suppose $\mathrm{char}(k)=5$. The following table summarizes the dimensions as computed by Proposition \ref{pf_SL2_decomp} for $V_i^{\mathrm{pf}}, V_i^{\mathrm{int}}$ on the blocks $A_0, A_1, A_2$ of dimensions $25, 50$ and $50$.

First, we consider $V_i^{\mathrm{pf}} = \pi_\alpha(V_i)$. We let $*$ denote the trivial vector space.

\begin{center}
\begin{tabular}{c|c|c|c|c|c|c|c|c}
\toprule
$i$                   & 0 & 1 & 2 & 3 & 4 & 5 & 6 & \ldots \\
\hline \hline
$\dim \pi_0(V_i)$     & 1 & 4 & 9 & 16 & 25 & 25 & 25 & \ldots \\

$\dim \pi_1(V_i)$     & 1 & 4 & 10 & 20 & 34 & 49 & 50 & \ldots \\

$\dim \pi_2(V_i)$     & 1 & 4 & 10 & 20 & 34 & 45 & 50 & \ldots \\
\bottomrule
\end{tabular}
\end{center}
\noindent For $\alpha=0$, we get the dimensions of the filtration on $k[\mathcal{N}]$ which are the squares. The other blocks have contributions from both graded algebras in the exact sequence. Next, we put down the dimensions for $V_i^{\mathrm{int}} = V_i \cap A_\alpha$.
\begin{center}
\begin{tabular}{c|c|c|c|c|c|c|c|c|c}
\toprule
$i$                  & $\ldots$ & 5 & 6 & 7 & 8 & 9 & 10 & 11 & 12 \\
\hline \hline
$\dim V_i \cap A_0$    & 0 & 0 & 0 & 0 & 9 & 16 & 21 & 24 & 25 \\

$\dim V_i \cap A_1$    & 0 & 0 & 1 & 16 & 30 & 40 & 46 & 49 & 50 \\

$\dim V_i \cap A_2$    & 0 & 0 & 5 & 16 & 30 & 40 & 46 & 49 & 50 \\
\bottomrule
\end{tabular}
\end{center}
As can be seen, the dimensions in the two tables sum in pairs to the total dimension of the block due to Theorem \ref{duality_theorem}.
\end{example}

Let us discuss the algebraic structure of $\gr_{\mathrm{pf}} A_\alpha$. Due to the projection $\pi_\alpha: \mathcal{U}_\chi(\sl) \to \gr_{\mathrm{pf}} A_\alpha$, we have a map
\begin{display}
\gr \mathcal{U}_\chi(\sl) \simeq \mathrm{S}(\sl)/\langle e^p, f^p, h^p \rangle \ar{r} & \gr_{\mathrm{pf}} A_\alpha.
\end{display}
We wish to determine the kernel of this map using our previous results. By Proposition \ref{restricted_degree}, when $\alpha=0$ we already understand the kernel is generated by the degree $p$ component along with $c-\alpha$ because we may write this as the composition
\begin{display}
\gr \mathcal{U}_\chi(\sl) \simeq \mathrm{S}(\sl)/\langle e^p, f^p, h^p \rangle \ar{r} & k[\mathcal{N}_p] \ar{r} &  \gr_{\mathrm{pf}} A_\alpha.
\end{display}

For $\chi=0$ and $\alpha \neq 0$ the kernel $\mathrm{S}(\sl)/\langle e^p, f^p, h^p \rangle \to \gr_{\mathrm{pf}} A_\alpha$ is generated by $(c-\alpha)^2$ which has degree $4$ and elements in degree $p+\omega$ and $p-\omega+2$. The additional $+1$ comes from $(c-\alpha)$ now having degree two and shifting degrees in Proposition \ref{ideal_decomposition}, as discussed in Proposition \ref{pf_SL2_decomp}. We can see this using the diagram
\begin{display}
                                          & \gr_{\mathrm{pf}} A_\alpha \ar{rd} &                                     \\
\gr \mathcal{U}_\chi(\sl) \ar{ru}{\pi_\alpha} \ar{rr} &                                     & \gr_{\mathrm{pf}} A_\alpha/\langle c-\alpha \rangle
\end{display}
where the bottom map has kernel generated by $c-\alpha$ and elements in degree $p+\omega$. Through our understanding of the all maps besides $\pi_\alpha$, we deduce the stated kernel.

\subsection{The adjoint representation}

Having computed the structure of $\gr_{\mathrm{sh}} A_\alpha$ and $\gr_{\mathrm{pf}} A_\alpha$ as $\SL_2$-modules for $\chi=0$, we can use these results to determine the structure of the adjoint representation for $\mathcal{U}_0(\sl)$. We define it as the result of applying the differentiation functor $D$ to the adjoint action of $\SL_2$ on $\mathcal{U}_0(\sl)$. More explicitly, we extend the adjoint representation of $\sl$ to $\mathcal{U}_0(\sl)$ via the Leibnitz rule, and consider the corresponding $\mathcal{U}_0(\sl)$-representation. 

\begin{proposition-N}
As an $\SL_2$-module we have
\[A_0 \simeq \bigoplus_{i=0}^{(p-1)/2} P_{2i}\]
where $P_{2i}$ is an indecomposable $\SL_2$-module, such that $D(P_{2i})=P_{0,2i}$ for projective modules of Lemma \ref{projective_covers}. 
\label{regular_projective_SL2}
\end{proposition-N}

\begin{proof}
Recall that we have
\[\gr_{\mathrm{pf}} A_0 \simeq \bigoplus_{d<p} \nabla_{2d}\]
in this case. Note that among the presented Weyl modules only pairs of $\nabla_{2j}$ and $\nabla_{2p-2-2j}$ lie in the same block as $\SL_2$-representations and can form a nontrivial extension. We aim to show that when possible, we do have a nontrivial extension in this decomposition.

Note that $A_0=\End(L_{0,p-1})=L_{0,p-1}\otimes L_{0,p-1}^*=L_{0,p-1}\otimes L_{0,p-1}$ as an $\sl$-representation. In particular, it is a projective $\sl$-module. Since, indecomposable projective $\sl$-modules have dimension $p$ or $2p$, extensions between $D(\nabla_{2j})$ and $D(\nabla_{2p-2-2j})$ are all necessarily nontrivial and, hence, the same is true for $\nabla_{2j}$ and $\nabla_{2p-2-2j}$. We identify the corresponding projective $\sl$-module as being exactly $P_{0,2i}$ by noticing that its socle is equal to the socle of $D(\nabla_{2j})$, which in turn equals to $D(L_{2j})=L_{0,2j}$ for $j\le (p-1)/2$.


\end{proof}

The following lemma allows us to extend the calculation for this block to other blocks for the trivial character.

\begin{lemma-N}
In $A_\alpha/\langle c-\alpha \rangle$, the $\SL_2$-submodule $\overline{V_{p-1}^{\mathrm{pf}}}$ is independent of $\alpha$, where the bar denotes reduction modulo $\langle c-\alpha \rangle$.
\end{lemma-N}

\begin{proof}
We have a map
\begin{display}
\mathcal{U}(\sl)/\langle c-\alpha\rangle  \ar{r} & A_\alpha/\langle c-\alpha \rangle, 
\end{display}
which after passing to the associated graded algebras for the PBW filtration and pushforward PBW filtration on $A_\alpha$ provide a map
\begin{display}
\mathrm{S}(\sl)/\langle c \rangle \ar{r} & \gr_{\mathrm{pf}} A_\alpha/\langle c-\alpha \rangle,
\end{display}
The  quotient $\mathrm{S}(\sl)/\langle c \rangle$ is identified with $k[\mathcal{N}]$ as an algebra and $\SL_2$-module and, thus, we know that the second map is isomorphism in degrees below $p$. It follows that the first map is also an isomorphism in degrees below $p$. 

Note now that the algebra $\mathcal{U}(\sl)$ is free as a module over its Harish-Chandra center $Z_{\mathrm{HC}}\simeq k[c]$. By the definition $\SL_2$ acts trivially on $Z_{\mathrm{HC}}$. It follows that the $\SL_2$-module structure on $\mathcal{U}(\sl)/\langle c-\alpha\rangle$ does not depend on $\alpha$. This implies the claim.
\end{proof}

\begin{corollary}
There is an isomorphism of $\SL_2$-representations $(A_\alpha/\langle c-\alpha \rangle)_{<p}\simeq A_0$. 
\end{corollary}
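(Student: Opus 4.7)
The plan is to deduce this corollary directly from the preceding lemma by analyzing the special case $\alpha = 0$. The lemma asserts that the $\SL_2$-submodule $\overline{V_{p-1}^{\mathrm{pf}}} \subset A_\alpha/\langle c-\alpha\rangle$ does not depend on $\alpha$, so the only real task is to identify this submodule with $A_0$ when specialized to $\alpha = 0$.

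First, I would observe that Corollary \ref{coinvariant_algebras} gives $C_0 = k[c]/c$, so $c$ acts as zero in $A_0$ and the ideal $\langle c - 0\rangle$ is trivial. Hence $A_0/\langle c-0\rangle = A_0$ and the reduction map is the identity. Moreover, because $c$ acts as zero on $A_0$, the inductive definition of the shifted filtration collapses to
\[V_i^{\mathrm{sh}} = \pi_0(V_i) + V_{i-1}^{\mathrm{sh}} = V_i^{\mathrm{pf}},\]
so the shifted and pushforward filtrations coincide on $A_0$.

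Next, I would apply Proposition \ref{restricted_degree} with $\omega = 0$ to get $\gr_{\mathrm{pf}} A_0 = \gr_{\mathrm{sh}} A_0 \simeq k[\mathcal{N}_p]_{<p}$. By Proposition \ref{nilcone_sl2_decomp}, the total dimension of $k[\mathcal{N}_p]_{<p}$ is $1 + 3 + \cdots + (2p-1) = p^2$, matching $\dim_k A_0 = p^2$. Therefore the pushforward filtration on $A_0$ already saturates at degree $p-1$, giving $V_{p-1}^{\mathrm{pf}} = A_0$ as $\SL_2$-modules.

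Finally, applying the preceding lemma simultaneously at the parameters $\alpha$ and $0$ produces a chain of $\SL_2$-equivariant identifications
\[(A_\alpha/\langle c-\alpha \rangle)_{<p} \;=\; \overline{V_{p-1}^{\mathrm{pf}}}\bigr|_\alpha \;\simeq\; \overline{V_{p-1}^{\mathrm{pf}}}\bigr|_0 \;=\; V_{p-1}^{\mathrm{pf}}\bigr|_0 \;=\; A_0,\]
which is the desired isomorphism. Since the preceding lemma supplies the main input, there is no serious obstacle remaining; the only technical verifications are the collapse $V_i^{\mathrm{sh}} = V_i^{\mathrm{pf}}$ on $A_0$ and the dimension count forcing saturation at degree $p-1$.
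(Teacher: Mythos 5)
Your proposal is correct and follows exactly the deduction the paper intends (the corollary is stated without proof as an immediate consequence of the preceding lemma): specialize the lemma to $\alpha=0$, where $\langle c\rangle=0$ by Corollary \ref{coinvariant_algebras}, and use Proposition \ref{restricted_degree} together with the dimension count $\dim k[\mathcal{N}_p]_{<p}=p^2=\dim A_0$ to see that $V_{p-1}^{\mathrm{pf}}=A_0$. Your auxiliary observations (collapse of the shifted filtration to the pushforward one on $A_0$, saturation at degree $p-1$) are precisely the verifications needed, so there is no gap.
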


From this, we may deduce the $\sl$-module structure of both $A_\alpha/\langle c-\alpha \rangle$ and $(c-\alpha)A_\alpha$.

\begin{proposition-N}
For all $\alpha$, we have a decomposition of $\sl$-modules
\[A_\alpha/\langle c-\alpha \rangle = \bigoplus_{0\le i\le(p-1)/2} P_{0,2i} \oplus \bigoplus_{p\le i < p+\omega} L_{0,3p-2i-2}^{\oplus 2}.\] 
For $(c-\alpha)A_\alpha$ when $\alpha \neq 0$, we have a decomposition
\[\bigoplus_{\omega \le i \le \frac{p-1}{2}} P_{0,2i} \oplus \bigoplus_{p-\omega < i \le p} L_{0,2p-2i}^{\oplus 2}.\]

\label{projective_decomp}
\end{proposition-N}

\begin{proof}
The result on $A_\alpha/\langle c-\alpha \rangle$ can be deduced by using Proposition \ref{restricted_degree}. In degrees $<p$ we have a projective $\sl$-submodule, which, hence splits off as a direct summand. Remaining simple $\SL_2$-modules belong to different blocks and, hence, do not have nontrivial extensions between them. It remains to observe that $D(L_{4p-2i-2})=L_{0, 3p-2i-2}^{\oplus 2}$.

The decomposition of $(c-\alpha)A_\alpha$ comes from applying the multiplication by $(c-\alpha)$ map to $(A_\alpha/\langle c-\alpha \rangle)_{<p}\simeq A_0$. By Proposition \ref{ideal_decomposition} we know that in degrees $ \le p-\omega$ it is an isomorphism. Hence for $\omega \le i \le \frac{p-1}{2}$ the modules $P_{2i}$ map isomorphically from $(A_\alpha/\langle c-\alpha \rangle)_{<p}$ onto its image in $(c-\alpha)A_\alpha$. The resulting sum $\bigoplus_{\omega \le i \le \frac{p-1}{2}} P_{0,2i}$ is a projective module and consequently splits off as a direct summand. In degrees $>p-\omega$ the multiplication by $(c-\alpha)$ map is not an isomorphism, and so we are left with a collection of simple modules $L_{0,2p-2i}=D(L_{2p-2i})$ for $p-\omega < i \le p$ with two copies each as possible subquotients. Different such modules belong to different blocks and, therefore, have no nontrivial extensions. The two copies of $L_{0,2p-2i}$ appear as a direct sum because prior to applying $D$ the modules $L_{2p-2i}$ appear as a direct sum as they have no nontrivial extensions by \cite{cline1979ext1}. They lie in different blocks than the other $\SL_2$-modules which appear, and so each remaining simple enters the module as a direct summand.
\end{proof}

\begin{lemma-N}
For $\alpha\ne 0$ as an $\sl$-module, we have
\[A_\alpha = (c-\alpha)A_\alpha \oplus A_\alpha/\langle c-\alpha \rangle.\]
\end{lemma-N}

\begin{proof}
As seen above there are surjective morphisms of $\sl$-representations

\begin{display}
\mathcal{U}(\sl) \ar[two heads]{r}& \mathcal{U}(\sl)/\langle c-\alpha \rangle \ar[two heads]{r} & \bigoplus_{i=0}^{(p-1)/2} P_{0,2i}.
\end{display} 
Let $I$ be the kernel of the composition.

Consider a short exact sequence of $\sl$-representations
\begin{display}
0 \ar{r} & \mathcal{U}(\sl)/I \ar{r}{(c-\alpha)\cdot} & \mathcal{U}(\sl)/(c-\alpha)I \ar{r} & \mathcal{U}(\sl)/\langle c-\alpha \rangle \ar{r} & 0.
\end{display}
As $\mathcal{U}(\sl)/I$ is a projective $\sl$-module the sequence splits. 

Note further that we know that the projection $\mathcal{U}(\sl)\to A_\alpha$ quotients through $\mathcal{U}(\sl)/(c-\alpha)I$ with $(c-\alpha)\mathcal{U}(\sl)/(c-\alpha)I$ mapping onto $(c-\alpha)A_\alpha$. Hence we also have the splitting of the sequence \begin{display}
0 \ar{r} & (c-\alpha)A_\alpha \ar{r} & A_\alpha \ar{r} & A_\alpha/\langle c-\alpha \rangle \ar{r} & 0
\end{display} and the statement follows.
\end{proof}

With these results, we have now determined the structure of $A_\alpha$ as a direct summand in the adjoint representation of $\mathcal{U}_0(\sl)$.

\begin{theorem-N}
The structure of a block $A_0\simeq\bigoplus_{i=0}^{(p-1)/2} P_{0,2i}$ as an $\sl$-module. For $\alpha \neq 0$ the adjoint representation $A_\alpha$ is the direct sum of the two components in Proposition \ref{projective_decomp}.
\end{theorem-N}

\begin{proof}
Follows from the previous results.
\end{proof}

\begin{remark}
The decomposition matches with the decomposition of the adjoint representation for the small quantum group $\sl$ at the $p$th root of unity established in \cite{ostrik}.
\end{remark}

\section{Acknowledgments}
The authors thank Roman Bezrukavnikov for introducing the problem and many helpful suggestions. The authors are grateful to the MIT SPUR program and its organizers and, especially, its advisors Ankur Moitra and David Jerison for helpful conversations. The first author was funded by RFBR, project number 19-31-90078.
\medskip

\bibliographystyle{acm}
\bibliography{citations.bib}

\end{document}